\renewcommand{\qed}{\hfill$\boxtimes\hspace{-1.725ex}\boxplus$}
\newcommand{\qedef}{\hfill$\triangle\hspace{-0.695em}\diamond$}
\begin{document}

\title{\itshape{On Decidability of  the Ordered Structures of Numbers}\thanks{This is  a part of the Ph.D. thesis of the first author written under the supervision of the second author who is partially supported by grant  ${\sf N}^{\sf o}$ 90030053 of the Institute for Research in Fundamental Sciences ($\mathbb{IPM}$), Tehran, {\sc Iran}.}
}
%\subtitle{Do you have a subtitle?\\ If so, write it here}

%\titlerunning{Short form of title}        % if too long for running head

\author{{\bfseries {\scshape Ziba Assadi}}         \  {\bfseries {\scshape \&}} \
        {\bfseries {\scshape Saeed Salehi}} %etc.
}

\authorrunning{\sc Ziba Assadi \ \& \  Saeed Salehi} % if too long for running head

\institute{Z. Assadi \at
Department of Mathematics, University of Tabriz, 29 Bahman Blvd., P.O.Box~51666--16471, Tabriz, {\sc Iran}. \\  \email{z\_assadi.golzar@tabrizu.ac.ir}           %  \\
%             \emph{Present address:} of F. Author  %  if needed
           \and
          S. Salehi \at
              Research Institute for Fundamental Sciences (RIFS), University of Tabriz, 29 Bahman Blvd., P.O.Box~51666--16471, Tabriz, {\sc Iran}.   \qquad   \email{salehipour@tabrizu.ac.ir} \\  School of Mathematics,  Institute for Research in Fundamental Sciences ($\mathbb{IPM}$), P.O.Box 19395--5746,  Niavaran, Tehran, {\sc Iran}. \\    \email{saeedsalehi@ipm.ir}  \hfill
              Web: \url{http://saeedsalehi.ir}
}

\date{}
% The correct dates will be entered by the editor

\maketitle

\begin{abstract}
The  ordered structures  of natural, integer, rational and real numbers are studied here. It is  known that the theories of these numbers in the language of order are decidable and finitely axiomatizable. Also, their theories in the language of order and addition are decidable and infinitely axiomatizable. For the language of order and multiplication, it is known that the theories of $\mathbb{N}$ and $\mathbb{Z}$ are not decidable (and so not axiomatizable by any  computably enumerable set of sentences). By Tarski's theorem, the multiplicative ordered structure of $\mathbb{R}$ is decidable also; here we prove this result directly and present an axiomatization. The structure of $\mathbb{Q}$ in the language of order and multiplication seems to be missing in the literature; here we show the decidability of its theory by the technique of quantifier elimination and after presenting an infinite axiomatization for this structure we prove that it is not finitely axiomatizable.
\keywords{Decidability \and Undecidability  \and Completeness \and Incompleteness   \and  First-Order Theory \and Quantifier Elimination \and Ordered Structures.}
% \PACS{PACS code1 \and PACS code2 \and more}
\subclass{03B25 \and 03C10 \and 03D35 \and 03C65.}
\end{abstract}

%\paragraph{Paragraph headings} Use paragraph headings as needed.
%
%\begin{equation}
%a^2+b^2=c^2
%\end{equation}

\section{Introduction and Preliminaries}\label{sec-intro}
{\em Entscheidungsproblem}, one of the fundamental problems of (mathematical) logic, asks for a
single-input Boolean-output algorithm that takes a formula $\varphi$ as input and outputs {\tt `yes'} if $\varphi$ is logically valid and outputs {\tt `no'} otherwise.
 Now, we know that this problem is not (computably) solvable.  One reason for this is the existence of an essentially undecidable and finitely axiomatizable theory, see e.g.~\cite{visser}; for another proof see~\cite[Theorem 11.2]{bbj}. However, by G\"odel's completeness theorem, the set of logically valid formulas is computably enumerable, i.e., there exists an input-free algorithms that (after running) lists all the valid formulas (and nothing else).
For the structures, since their theories are complete, the story is different: the theory of a structure is either decidable or that structure is not axiomatizable (by any computably enumerable set of sentences; see e.g.~\cite[Corollaries 25G and 26I]{enderton} or~\cite[Theorem 15.2]{monk}).
For example, the additive theory of natural numbers $\langle\mathbb{N};+\rangle$
    was shown to be decidable by Presburger in 1929
     (and by Skolem in 1930; see \cite{smorynski}).
     The multiplicative theory of the natural numbers
     $\langle\mathbb{N};\times\rangle$ was announced  to be decidable by Skolem in 1930. Then it was expected that the theory of addition and multiplication of natural numbers
would be decidable too; confirming Hilbert's Program. But the world was shocked in 1931
 by G\"odel's incompleteness theorem which implies that  the theory of
  $\langle\mathbb{N};+,\times\rangle$ is undecidable (see the subsection~\ref{subsec-n} below).
In this paper we study the theories of the sets $\mathbb{N}$, $\mathbb{Z}$, $\mathbb{Q}$ and $\mathbb{R}$ in the languages $\{<\}$, $\{<,+\}$ and $\{<,\times\}$; see the table below.

\begin{center}
\begin{tabular}{|c||c|c|c|c|}
\hline
  & $\mathbb{N}$ & $\mathbb{Z}$ & $\mathbb{Q}$ & $\mathbb{R}$ \\
\hline
\hline
$\{<\}$ & Thm.~\ref{thm-on} & Thm.~\ref{thm-oz} & Thm.~\ref{thm-o} & Thm.~\ref{thm-o} \\
\hline
$\{<,+\}$ & Rem.~\ref{rem-noa} & Thm.~\ref{thm-oaz} & Thm.~\ref{thm-oa} & Thm.~\ref{thm-oa} \\
\hline
$\{<,\times\}$ & $\S$~\ref{subsec-n} & $\S$~\ref{subsec-z} & Thm.~\ref{omq+} & Thm.~\ref{thm-or} \\
\hline
\hline
$\{+,\times\}$ & $\S$~\ref{subsec-n} & $\S$~\ref{subsec-z} &   $\S$~\ref{subsec-q} & $\S$~\ref{subsec-r} \\
\hline
\end{tabular}
\end{center}

Let us note that order is definable in the language $\{+,\times\}$ in these sets: in $\mathbb{N}$ by
  $x<y\iff \exists z (z\!+\!z \neq z\wedge x\!+\!z\!=\!y)$, and in $\mathbb{Z}$ by Lagrange's four square theorem $x<y$ is equivalent with    $\exists t,u,v,w (x\!\neq\!y \wedge x\!+\!t\!\cdot\!t\!+\!u\!\cdot\!u\!+\!v\!\cdot\!v\!+\!w\!\cdot\!w =y).$ The four square  theorem holds in $\mathbb{Q}$ too: for any   $p/q\!\in\!\mathbb{Q}^+$ we have $pq\!>\!0$ so  $pq\!=\!a^2\!+\!b^2\!+\!c^2\!+\!d^2$ for some integers $a,b,c,d$; therefore, %the equality
  $p/q\!=\!pq/q^2\!=\!(a/q)^2\!+\!(b/q)^2\!+\!(c/q)^2\!+\!(d/q)^2$ holds. Thus, the same formula defines the order ($x<y$) in $\mathbb{Q}$ as well. Finally, in   $\mathbb{R}$  the relation      $x<y$ is equivalent with the formula $\exists z (z\!+\!z\!\neq\!z\wedge x+z\!\cdot\!z=y)$.

The decidability of $\mathbb{N},\mathbb{Z},\mathbb{Q},\mathbb{R}$ in the languages $\{<\}$ and $\{<,+\}$ is already known.  It is also known that the theories of   $\mathbb{N}$ and $\mathbb{Z}$ in the language $\{<,\times\}$ are undecidable. The theory of    $\mathbb{R}$ in the language $\{<,\times\}$ is decidable  too by Tarski's theorem (which states the decidability of the theory of  $\langle\mathbb{R};<,+,\times\rangle$). Here, we prove this directly by presenting an explicit axiomatization. Finally, the structure $\langle\mathbb{Q};<,\times\rangle$ is studied in this paper (seemingly, for the first time). We show, by the method of quantifier elimination, that the theory of this structure is decidable. Here,   the (super-)structure  $\langle\mathbb{Q};+,\times\rangle$ is not usable since it is undecidable (proved by Robinson~\cite{robinson}; see also \cite[Theorem 8.30]{smorynski}). On the other hand its (sub-)structure $\langle\mathbb{Q};\times\rangle$ is decidable (proved in~\cite{mostowski} by Mostowski; see also~\cite{salehi-m}). So, the three structures  $\langle\mathbb{Q};+,\times\rangle$ and $\langle\mathbb{Q};<,\times\rangle$ and $\langle\mathbb{Q};\times\rangle$ are different from each other; the order relation  $<$ is not definable in $\langle\mathbb{Q};\times\rangle$ and the addition operation $+$ is not definable in $\langle\mathbb{Q};<,\times\rangle$ (by our results). This paper is a continuation of the conference paper~\cite{salehi-mo}.

\section{The Ordered Structure of Numbers}\label{sec-order}

\begin{definition}[Ordered Structure]\label{def-os}
An {\em ordered structure} is a triple $\langle A;<,\mathcal{L}\rangle$ where $A$ is a non-empty set and $<$ is a binary relation on $A$ which satisfies the following axioms:

\begin{tabular}{l}
($\texttt{O}_1$) \; $\forall x,y(x<y\rightarrow y\not<x)$,  \\ ($\texttt{O}_2$) \; $\forall x,y,z (x<y<z\rightarrow x<z)$ and \\
($\texttt{O}_3$) \; $\forall x,y (x<y \vee x=y \vee y<x)$;
\end{tabular}

\noindent
and $\mathcal{L}$ is a language.  \qedef
\end{definition}
Here $\mathcal{L}$ could be empty, or any language, for example $\{+\}$ or $\{\times\}$ or $\{+,\times\}$.
\begin{definition}[Various Types of Orders]\label{def-vto}
A linear  order relation $<$ is called {\em dense} if it satisfies

\begin{tabular}{l}
($\texttt{O}_4$) \; $\forall x,y (x<y\rightarrow\exists z [x<z<y])$.
\end{tabular}

\noindent An order relation $<$ is called
{\em without endpoints} if it satisfies

\begin{tabular}{l}
 ($\texttt{O}_5$) \; $\forall x\exists y (x<y)$  and  \\ ($\texttt{O}_6$) \; $\forall x\exists y (y<x)$.
\end{tabular}

\noindent A {\em discrete} order has the property that   any element has an immediate successor (i.e., there is no other element in between them). If the successor of $x$ is denoted by $\mathfrak{s}(x)$ then a discrete order satisfies

\begin{tabular}{l}
($\texttt{O}_7$) \; $\forall x,y (x\!<\!y \;\leftrightarrow\; \mathfrak{s}(x)\!<\!y \vee \mathfrak{s}(x)\!=\!y)$.
\end{tabular}

\noindent The successor of an integer $x$ is $\mathfrak{s}(x)=x+1$.
\qedef\end{definition}

\begin{remark}[\textbf{The Main Lemma of Quantifier Elimination}]\label{mainlem}
It is known that a theory (or a structure) admits quantifier elimination if and only if every formula of the form $\exists x (\bigwedge\hspace{-1.5ex}\bigwedge_{i}\alpha_i)$ is equivalent with  a quantifier-free formula, where each $\alpha_i$ is either an atomic formula or the negation of an atomic formula.
This has been proved in e.g.
\cite[Theorem 31F]{enderton},
\cite[Lemma 2.4.30]{hinman},
\cite[Theorem 1, Chapter 4]{kk},
\cite[Lemma 3.1.5]{marker}
and \cite[Lemma 4.1]{smorynski}.
In the presence of a linear order relation ($<$) by the equivalences $(s\neq t) \leftrightarrow (s<t \vee t<s)$ and  $(s\not < t) \leftrightarrow (t\leqslant s)$, which follow from the axioms  $\{\texttt{O}_1,\texttt{O}_2,\texttt{O}_3\}$ (of Definition~\ref{def-os}), we do not need to consider the negated atomic formulas (when there is no   relation symbol in the language other than $<,=$).
%So, (the theory of) an ordered structure admits quantifier elimination if and only if every formula of the form $\exists x (\bigwedge\hspace{-1.5ex}\bigwedge_{i}\alpha_i)$ is equivalent with  a quantifier-free formula, where each $\alpha_i$ is an atomic formula.
\qedef\end{remark}

\paragraph[]{\bf Convention:}
Let $\bot$ denote the (propositional constant of) contradiction, and $\top$ the truth. By convention, $a\leqslant b$ abbreviates  $a<b \vee a=b$.
The symbols $\times$ and $\cdot$ are used interchangeably throughout the paper. For convenience, let us agree that  $0^{-1}=0$ as this does not contradict our intuition. Needless to say, $x^n$ symbolizes $x\cdot x \cdot \ldots \cdot x$ ($n-$times); also $x+x+\cdots +x$ ($n-$times) is abbreviated as $n\centerdot x$.\qedef

\bigskip

The following theorem has been proved in
\cite[Theorems 2.4.1 and 3.1.3]{marker}.
 Here, we present a syntactic (proof-theoretic) proof.

\begin{theorem}[Axiomatizablity of $\langle\mathbb{R};<\rangle$ and $\langle\mathbb{Q};<\rangle$]\label{thm-o}
The    theory axiomatized by the finite set $\{\texttt{O}_1,\texttt{O}_2,\texttt{O}_3,
\texttt{O}_4,\texttt{O}_5,\texttt{O}_6\}$ \textup{(}i.e., the theory of dense linear orders without endpoints, see  Definitions~\ref{def-os} and~\ref{def-vto}\textup{)}
completely axiomatizes the order theory of the   real and rational  numbers and, moreover, the structure $\langle\mathbb{R};<\rangle$ \textup{(}and also $\langle\mathbb{Q};<\rangle$\textup{)} admits quantifier elimination, and so its theory is decidable.
\end{theorem}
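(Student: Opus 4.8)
The plan is to prove the theorem by establishing quantifier elimination for the theory $T=\{\texttt{O}_1,\dots,\texttt{O}_6\}$ (dense linear orders without endpoints, DLO), from which completeness and decidability follow by standard arguments. By the Main Lemma (Remark~\ref{mainlem}), it suffices to eliminate the quantifier from a single existential block $\exists x\,(\bigwedge_i \alpha_i)$ where each $\alpha_i$ is atomic or negated atomic. Since the only relation symbols are $<$ and $=$, the remark tells us we may assume each $\alpha_i$ is a positive atomic formula, i.e.\ of the form $s<t$, $t<s$, or $s=t$, where $s,t$ are among the variable $x$ and the other free variables $y_1,\dots,y_n$ (there are no function symbols, so terms are just variables).

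First I would handle the trivial cases: if $x$ does not occur in any $\alpha_i$, the quantifier is vacuous and may simply be dropped. If some conjunct is $x=t$ with $t$ a variable distinct from $x$, I would substitute $t$ for $x$ throughout and delete the quantifier. If some conjunct is $x=x$ it is $\top$ and is discarded, while $x<x$ is $\bot$ and collapses the whole formula. This reduces matters to the principal case in which every conjunct involving $x$ has one of the two forms $y_j<x$ or $x<y_k$ (a lower bound or an upper bound on $x$), together with finitely many conjuncts not involving $x$ which pass through the quantifier untouched. So I must show that
\[
\exists x\Bigl(\bigwedge_{j} y_j<x \ \wedge\ \bigwedge_{k} x<y_k\Bigr)
\]
is equivalent to a quantifier-free formula.

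The key step is to observe that such an $x$ exists precisely when every lower bound lies strictly below every upper bound: the equivalent quantifier-free formula is $\bigwedge_{j,k}\, y_j<y_k$. The forward direction is immediate from transitivity $(\texttt{O}_2)$. For the backward direction I would split into three subcases according to which bounds are present. If there are no bounds at all, the formula is $\exists x\,\top$, and by non-emptiness of the domain (or by $\texttt{O}_5$) some $x$ exists. If there are only lower bounds $y_j<x$ (no upper bounds), density plays no role: I invoke the absence of a greatest element $(\texttt{O}_5)$ to find an $x$ above the finitely many $y_j$; symmetrically, if there are only upper bounds, I use $(\texttt{O}_6)$. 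The genuinely interesting subcase is when both lower and upper bounds are present: taking a greatest lower bound $y_j$ and a least upper bound $y_k$ among the finitely many (using linearity $\texttt{O}_3$ to compare them), the hypothesis gives $y_j<y_k$, and then density $(\texttt{O}_4)$ yields an $x$ strictly between them, which automatically satisfies all the original constraints by transitivity.

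The main obstacle—such as it is—is not conceptual but book-keeping: one must be careful that the finite conjunctions of bounds may be empty, and that the three roles of the axioms ($\texttt{O}_5,\texttt{O}_6$ for one-sided bounds, $\texttt{O}_4$ for the two-sided case) are correctly invoked, so that the equivalence holds in every model of $T$ and not merely in $\mathbb{Q}$ or $\mathbb{R}$. Once quantifier elimination is established this way, decidability follows because a sentence reduces to a quantifier-free sentence in a language with no constants, which is a Boolean combination of $\top$ and $\bot$ and hence effectively evaluable; completeness follows because any two models of $T$ satisfy the same sentences, and in particular $\langle\mathbb{Q};<\rangle$ and $\langle\mathbb{R};<\rangle$, being models of $T$, have their full order theory axiomatized by $T$. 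Since the paper requests a syntactic (proof-theoretic) treatment, I would phrase the backward direction as a formal derivation from $T$ of the implication $\bigwedge_{j,k} y_j<y_k \to \exists x(\cdots)$ rather than as a model-theoretic existence argument, but the combinatorial content is exactly the case analysis above.
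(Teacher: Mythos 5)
Your proposal is correct and follows essentially the same route as the paper: reduce via Remark~\ref{mainlem} to an existential over a conjunction of atomic order/equality constraints, eliminate equality conjuncts by substitution, and then dispose of the pure-inequality case with $\texttt{O}_5$, $\texttt{O}_6$ for one-sided bounds and $\texttt{O}_4$ (with $\texttt{O}_2$, $\texttt{O}_3$) for the two-sided case, yielding the quantifier-free equivalent $\bigwedge_{j,k} y_j<z_k$. Your added bookkeeping (max/min of bounds, the vacuous-quantifier case) and the explicit derivation of completeness and decidability from quantifier elimination are exactly the standard details the paper leaves implicit.
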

\begin{proof}
All the atomic formulas are either of the form $u<v$ or $u=v$ for some variables $u$ and $v$. If both of the variables are equal then $u<u$ is equivalent with $\bot$  by $\texttt{O}_1$ and $u=u$ is equivalent with $\top$.
So, by Remark~\ref{mainlem},  it suffices to eliminate the quantifier of the formulas of the form
\begin{equation}\label{f-1}
\exists x (\bigwedge\hspace{-2.15ex}\bigwedge_{i<\ell} y_i<x \wedge \bigwedge\hspace{-2.55ex}\bigwedge_{j<m} x<z_j \wedge \bigwedge\hspace{-2.35ex}\bigwedge_{k<n} x=u_k)
\end{equation}
where $y_i$'s, $z_j$'s and $u_k$'s are variables. Now, if $n\neq 0$ then the formula~\eqref{f-1} is equivalent with the quantifier-free formula
\begin{equation*}
\bigwedge\hspace{-2.15ex}\bigwedge_{i<\ell} y_i<u_0 \wedge \bigwedge\hspace{-2.55ex}\bigwedge_{j<m} u_0<z_j \wedge \bigwedge\hspace{-2.35ex}\bigwedge_{k<n} u_0=u_k.
\end{equation*}
So, let us suppose that $n=0$. Then if $m=0$ or  $\ell=0$  the formula~\eqref{f-1} is equivalent with the quantifier-free formula $\top$, by the axioms   $\texttt{O}_5$ and $\texttt{O}_6$  (with $\texttt{O}_2$ and $\texttt{O}_3$) respectively,  and if $\ell,m\neq 0$ it is equivalent with the quantifier-free formula $\bigwedge\hspace{-1.55ex}\bigwedge_{i<\ell,j<m} y_i<z_j$ by the axiom  $\texttt{O}_4$ (with $\texttt{O}_2$ and $\texttt{O}_3$).
\qed\end{proof}

In fact for any set $A$ such that $\mathbb{Q}\subseteq A\subseteq\mathbb{R}$ the structure $\langle A;<\rangle$ can be completely axiomatized by the finite set of axioms $\{\texttt{O}_1,\texttt{O}_2,\texttt{O}_3,\texttt{O}_4,
\texttt{O}_5,\texttt{O}_6\}$ in Definitions~\ref{def-os} and~\ref{def-vto}.

The theory of the structure $\langle\mathbb{Z};<\rangle$ does not admit quantifier elimination: for example the formula $\exists x (y<x<z)$ is not equivalent with any quantifier-free formula in the language $\{<\}$ (note that it is not equivalent with $y<z$). If we add the successor operation $\mathfrak{s}$ to the language then that formula will be equivalent with $\mathfrak{s}(y)<z$ and the process of quantifier elimination will go through.

\begin{theorem}[Axiomatizablity of $\langle\mathbb{Z};<\rangle$]\label{thm-oz}
The  finite  theory of discrete  linear orders without endpoints, consisting of the axioms    $\texttt{O}_1$, $\texttt{O}_2$, $\texttt{O}_3$, $\texttt{O}_7$ plus

\begin{tabular}{l}
 ($\texttt{O}_8$) \; $\forall x\exists y (\mathfrak{s}(y)=x)$
\end{tabular}

\noindent completely axiomatizes the order theory of the   integer   numbers and, moreover, the structure $\langle\mathbb{Z};<,\mathfrak{s}\rangle$ admits quantifier elimination, and so its theory is decidable.
\end{theorem}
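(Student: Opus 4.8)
The plan is to follow the same quantifier-elimination template used in the proof of Theorem~\ref{thm-o}, but now working in the expanded language $\{<,\mathfrak{s}\}$ so that the discreteness obstruction (witnessed by the formula $\exists x\,(y<x<z)$, which the preceding paragraph shows cannot be eliminated in $\{<\}$ alone) is resolved by the successor term. First I would record the relevant normal-form facts: by the axioms $\texttt{O}_1,\texttt{O}_2,\texttt{O}_3$ every atomic formula over $\{<,=\}$ reduces, as in Remark~\ref{mainlem}, so that negations need not be treated separately; and in the presence of $\mathfrak{s}$ every term in the variable $x$ is of the form $\mathfrak{s}^k(x)$ for some $k\geqslant 0$ (an iterated successor). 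Thus each conjunct $\alpha_i$ involving the existentially quantified $x$ can be put into one of the shapes $t<\mathfrak{s}^k(x)$, $\mathfrak{s}^k(x)<t$, or $\mathfrak{s}^k(x)=t$, where $t$ is a term not containing $x$.

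The key reduction step is to eliminate the successor iterates from the bound variable. Using $\texttt{O}_7$ (which lets one trade an inequality against $\mathfrak{s}(x)$ for one against $x$) together with $\texttt{O}_8$ (every element has a predecessor, so $x\mapsto\mathfrak{s}(x)$ is surjective, and by $\texttt{O}_7$ also injective, hence a bijection), I would show that each conjunct can be rewritten so that $x$ occurs alone, at the cost of applying $\mathfrak{s}$ finitely many times to the other side. Concretely, a constraint like $\mathfrak{s}^k(x)=t$ becomes $x=\mathfrak{s}^{?}(\dots)$ after moving successors across the equality via the predecessor axiom, and strict inequalities $y<\mathfrak{s}^k(x)$ convert, by repeated use of $\texttt{O}_7$, into inequalities of the form $\mathfrak{s}^{j}(y)<\mathfrak{s}(x)\vee\mathfrak{s}^j(y)=\mathfrak{s}(x)$ and ultimately into a relation between $x$ and a successor-iterate of $y$. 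After this normalization the formula to eliminate has exactly the form treated in~\eqref{f-1}, namely
\begin{equation*}
\exists x\,\Big(\bigwedge\hspace{-2.15ex}\bigwedge_{i<\ell} s_i<x \wedge \bigwedge\hspace{-2.55ex}\bigwedge_{j<m} x<t_j \wedge \bigwedge\hspace{-2.35ex}\bigwedge_{k<n} x=r_k\Big),
\end{equation*}
where now $s_i,t_j,r_k$ are successor-terms in the remaining (free) variables.

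The elimination then splits on cases exactly as before, with one essential new ingredient. If some equality conjunct is present ($n\neq 0$) we substitute $r_0$ for $x$ and drop the quantifier; if there are no lower bounds ($\ell=0$) the formula is equivalent to $\top$ by $\texttt{O}_8$ (predecessors always exist, together with $\texttt{O}_2,\texttt{O}_3$), and dually if $m=0$ we use the successor/no-endpoint consequence of $\texttt{O}_7$ and $\texttt{O}_8$. The genuinely new case is $\ell,m\neq0$: unlike the dense case, $\exists x\,(s_i<x<t_j)$ is \emph{not} equivalent to $\bigwedge s_i<t_j$, because discreteness requires a strict gap; here the correct quantifier-free equivalent is $\bigwedge_{i,j}\mathfrak{s}(s_i)<t_j$ (equivalently $\bigwedge_{i,j}s_i<t_j\wedge\mathfrak{s}(s_i)\neq t_j$, using $\texttt{O}_7$ to express that something lies strictly between). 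I would verify this equivalence directly from $\texttt{O}_7$: an element strictly between $s$ and $t$ exists precisely when $\mathfrak{s}(s)<t$.

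I expect the main obstacle to be the bookkeeping in the successor-normalization step rather than any conceptual difficulty: one must check that moving $\mathfrak{s}$ across strict inequalities and equalities (using $\texttt{O}_7$ for successors and $\texttt{O}_8$ for predecessors, and the bijectivity of $\mathfrak{s}$ that these axioms entail) is reversible and preserves equivalence over the theory, so that the comparison of two successor-iterates $\mathfrak{s}^a(y)$ and $\mathfrak{s}^b(z)$ always reduces to a quantifier-free formula in $y,z$ via $\texttt{O}_7$. Once quantifier elimination is established, completeness follows because every variable-free atomic sentence is decided outright (the numerals $\mathfrak{s}^k(\text{anything})$ are comparable by $\texttt{O}_3$), so the theory is complete and, being recursively axiomatized with a terminating elimination procedure, decidable; the axioms $\texttt{O}_1,\texttt{O}_2,\texttt{O}_3,\texttt{O}_7,\texttt{O}_8$ are all valid in $\langle\mathbb{Z};<,\mathfrak{s}\rangle$, so this complete theory is exactly its theory.
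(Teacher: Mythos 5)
Your overall skeleton is the paper's: reduce, via Remark~\ref{mainlem}, to a conjunction of atomic constraints on the bound variable, then split into cases, with $\bigwedge_{i,j}\mathfrak{s}(t_i)<s_j$ as the quantifier-free equivalent in the two-sided case --- and that last, genuinely new case you handle correctly. But your key normalization step goes in the wrong direction and, as stated, fails. You claim each conjunct can be rewritten ``so that $x$ occurs alone, at the cost of applying $\mathfrak{s}$ finitely many times to the other side.'' This cannot be done in the language $\{<,\mathfrak{s}\}$: there is no predecessor function symbol, and $\texttt{O}_8$ only asserts that predecessors \emph{exist}, so ``moving a successor across'' an equation or inequality produces a fresh existential quantifier, not a term. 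Concretely, the constraint $\mathfrak{s}^k(x)<y$ (in $\mathbb{Z}$: $y-x>k$) is not equivalent, over the axioms, to any Boolean combination of atomic formulas in which $x$ occurs bare. The available atoms are $x<\mathfrak{s}^j(y)$, $\mathfrak{s}^j(y)<x$, $x=\mathfrak{s}^j(y)$ (with $j\geqslant 0$), together with $x$-free atoms and atoms in $x$ alone; in $\mathbb{Z}$, every one of these takes the same truth value at the points $(x,y)=(0,k)$ and $(x,y)=(0,k+1)$, whereas $\mathfrak{s}^k(x)<y$ is false at the first and true at the second. Likewise $\mathfrak{s}^k(x)=y$ cannot be ``solved for $x$.'' So your reduction of the formula~\eqref{f-2} to the shape of~\eqref{f-1} does not go through, and this is precisely the step on which the whole elimination rests.

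The repair is to normalize in the \emph{opposite} direction, which is what the paper does: using the congruences $[a<b]\leftrightarrow[\mathfrak{s}(a)<\mathfrak{s}(b)]$ and $[a=b]\leftrightarrow[\mathfrak{s}(a)=\mathfrak{s}(b)]$ (provable from the axioms), apply $\mathfrak{s}$ to \emph{both} sides of each conjunct until the bound variable carries one common iterate $\mathfrak{s}^\alpha(x)$ everywhere; then invoke $\texttt{O}_8$ --- the surjectivity of $\mathfrak{s}$, i.e.\ exactly the bijectivity fact you yourself isolate --- to replace $\exists x\,\varphi(\mathfrak{s}^\alpha(x))$ by $\exists y\,\varphi(y)$, arriving at the formula~\eqref{f-3} in which $y$ is bare in every conjunct. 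From there your case analysis (substitute when an equality conjunct is present; $\top$ when lower or upper bounds are absent, by the no-endpoint consequences of $\texttt{O}_7$ and $\texttt{O}_8$; otherwise $\bigwedge_{i,j}\mathfrak{s}(t_i)<s_j$ by $\texttt{O}_7$) is correct and coincides with the paper's. One further small correction: your completeness argument speaks of ``variable-free atomic sentences'' and ``numerals,'' but $\{<,\mathfrak{s}\}$ has no constant symbols; the right conclusion is that quantifier elimination turns every sentence into a quantifier-free formula with no variables at all, hence a Boolean combination of $\top$ and $\bot$, which together with the truth of the axioms in $\langle\mathbb{Z};<,\mathfrak{s}\rangle$ gives completeness and decidability.
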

\begin{proof}
We  note that all the terms in the language $\{<,\mathfrak{s}\}$ are of the form $\mathfrak{s}^n(y)$ for some variable $y$ and $n\in\mathbb{N}$. So, all the atomic formulas are either of the form    $\mathfrak{s}^n(u)=\mathfrak{s}^m(v)$ or $\mathfrak{s}^n(u)<\mathfrak{s}^m(v)$ for some variables $u,v$. If a variable $x$ appears in the both sides of an atomic formula, then we have either $\mathfrak{s}^n(x)=\mathfrak{s}^m(x)$ or $\mathfrak{s}^n(x)<\mathfrak{s}^m(x)$. The formula $\mathfrak{s}^n(x)=\mathfrak{s}^m(x)$ is equivalent with $\top$ when $n=m$ and with $\bot$ otherwise; also $\mathfrak{s}^n(x)<\mathfrak{s}^m(x)$ is equivalent with $\top$ when $n<m$ and with $\bot$ otherwise. So, it suffices to consider the atomic formulas of the form  $t<\mathfrak{s}^n(x)$ or $\mathfrak{s}^n(x)<t$ or $\mathfrak{s}^n(x)=t$ for some $x$-free term $t$ and $n\in\mathbb{N}^+$. Now, by Remark~\ref{mainlem}, we eliminate the quantifier of the formulas
\begin{equation}\label{f-2}
\exists x (\bigwedge\hspace{-2.15ex}\bigwedge_{i<\ell} t_i<\mathfrak{s}^{p_i}(x) \wedge \bigwedge\hspace{-2.55ex}\bigwedge_{j<m} \mathfrak{s}^{q_j}(x) <s_j \wedge \bigwedge\hspace{-2.35ex}\bigwedge_{k<n} \mathfrak{s}^{r_k}(x)=u_k).
\end{equation}
The axioms prove %the equivalences
$[a<b] \leftrightarrow [\mathfrak{s}(a)<\mathfrak{s}(b)]$ and   $[a=b] \leftrightarrow [\mathfrak{s}(a)=\mathfrak{s}(b)]$;   so
we can assume that $p_i$'s and $q_j$'s and $r_k$'s in
the formula~\eqref{f-2} are equal to each other, say to $\alpha$. Then by $\texttt{O}_8$ the formula~\eqref{f-2} is equivalent with
\begin{equation}\label{f-3}
\exists y (\bigwedge\hspace{-2.15ex}\bigwedge_{i<\ell} t_i'<y \wedge \bigwedge\hspace{-2.55ex}\bigwedge_{j<m} y<s_j' \wedge \bigwedge\hspace{-2.35ex}\bigwedge_{k<n} y=u_k')
\end{equation}
for some (possibly new)  terms $t_i',s_j',u_k'$ (and   $y=\mathfrak{s}^\alpha(x)$). Now, if $n\neq 0$ then the formula \eqref{f-3} is equivalent with the quantifier-free formula
\begin{equation*}
\bigwedge\hspace{-2.15ex}\bigwedge_{i<\ell} t_i'<u_0' \wedge \bigwedge\hspace{-2.55ex}\bigwedge_{j<m} u_0'<s_j' \wedge \bigwedge\hspace{-2.35ex}\bigwedge_{k<n} u_0'=u_k'.
\end{equation*}
Let us then assume that $n=0$. The formula
\begin{equation}\label{f-4}
\exists x (\bigwedge\hspace{-2.15ex}\bigwedge_{i<\ell} t_i<x \wedge \bigwedge\hspace{-2.55ex}\bigwedge_{j<m} x<s_j)
\end{equation}
is equivalent with the quantifier-free formula
$\bigwedge\hspace{-1.55ex}\bigwedge_{i<\ell,j<m} \mathfrak{s}(t_i)<s_j$
by the axiom $\texttt{O}_7$ (in Definition~\ref{def-vto}).
\qed\end{proof}

The structure $\langle\mathbb{N};<\rangle$ can also be finitely axiomatized. The following theorem has been proved in \cite[Theorem~32A]{enderton}   so we do not present its proof here.

\begin{theorem}[Axiomatizablity of $\langle\mathbb{N};<\rangle$]\label{thm-on}
The   finite theory consisting of the axioms $\{\texttt{O}_1,\texttt{O}_2,\texttt{O}_3,\texttt{O}_7\}$ \textup{(}in Definitions~\ref{def-os} and~\ref{def-vto}\textup{)} and also the following two axioms

\begin{tabular}{l}
 ($\texttt{O}_8^\circ$) \; $\forall x\exists y (x\neq {\bf 0} \rightarrow \mathfrak{s}(y)=x)$, \\
 ($\texttt{O}_9$) \; $\forall x (x\not<{\bf 0})$,
\end{tabular}

\noindent completely axiomatizes the order theory of the   natural   numbers and, moreover, the structure $\langle\mathbb{N};<,\mathfrak{s},{\bf 0}\rangle$ admits quantifier elimination, and so its theory is decidable.
\qed
\end{theorem}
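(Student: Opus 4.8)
The plan is to adapt the quantifier-elimination argument of Theorem~\ref{thm-oz} to the presence of the least element ${\bf 0}$. First I would note that every term of the language $\{<,\mathfrak{s},{\bf 0}\}$ is either $\mathfrak{s}^n(x)$ for a variable $x$ and some $n\in\mathbb{N}$, or a numeral $\mathfrak{s}^n({\bf 0})$; hence every atomic formula has the form $\mathfrak{s}^n(u)=\mathfrak{s}^m(v)$ or $\mathfrak{s}^n(u)<\mathfrak{s}^m(v)$ where each of $u,v$ is a variable or ${\bf 0}$. Exactly as in the integer case, an atomic formula in which the quantified variable $x$ occurs on both sides collapses to $\top$ or $\bot$ according to the two exponents, so by Remark~\ref{mainlem} it suffices to eliminate the quantifier from
\[
\exists x \Big(\bigwedge_{i<\ell} t_i<\mathfrak{s}^{p_i}(x) \wedge \bigwedge_{j<m}\mathfrak{s}^{q_j}(x)<s_j \wedge \bigwedge_{k<n}\mathfrak{s}^{r_k}(x)=u_k\Big),
\]
where $t_i,s_j,u_k$ are $x$-free terms (successors of variables, or numerals).

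The key step, and the point where $\mathbb{N}$ departs from $\mathbb{Z}$, is the normalization of the powers of $x$. Using the equivalences $[a<b]\leftrightarrow[\mathfrak{s}(a)<\mathfrak{s}(b)]$ and $[a=b]\leftrightarrow[\mathfrak{s}(a)=\mathfrak{s}(b)]$, which the axioms prove, I would raise all the exponents $p_i,q_j,r_k$ to a common value $\alpha$ and then pass to the new variable $y=\mathfrak{s}^\alpha(x)$. In the integers $\mathfrak{s}^\alpha$ is a bijection and this substitution is transparent; in $\mathbb{N}$ its range is exactly $\{\,y : \mathfrak{s}^\alpha({\bf 0})\leqslant y\,\}$, the lower inclusion being guaranteed by $\texttt{O}_9$ together with monotonicity and the surjectivity onto this interval being obtained by iterating the partial-predecessor axiom $\texttt{O}_8^\circ$. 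Consequently the formula becomes $\exists y\,(\mathfrak{s}^\alpha({\bf 0})\leqslant y \wedge \cdots)$; by $\texttt{O}_7$ the extra conjunct is the numeral lower bound $\mathfrak{s}^{\alpha-1}({\bf 0})<y$ when $\alpha\geqslant 1$, and is simply $\top$ (discarded via $\texttt{O}_9$) when $\alpha=0$. Thus we are reduced to a formula of the same shape as~\eqref{f-3} in Theorem~\ref{thm-oz}, except that one of the lower bounds may now be a numeral.

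It remains to finish as in the discrete case, with care at the endpoints. If $n\neq 0$ I substitute $y:=u_0'$ to obtain a quantifier-free conjunction. If $n=0$, the axiom $\texttt{O}_7$ turns $\exists y(\bigwedge_i L_i<y \wedge \bigwedge_j y<U_j)$ into $\bigwedge_{i,j}\mathfrak{s}(L_i)<U_j$; here the unboundedness of $\mathbb{N}$ above makes the empty-upper-bound case ($m=0$) equivalent to $\top$, while the empty-lower-bound case ($\ell=0$, which can only arise when $\alpha=0$) is settled by the least element, giving $\bigwedge_j{\bf 0}<U_j$ since the smallest admissible witness is ${\bf 0}$. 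This establishes quantifier elimination. Completeness and decidability then follow formally: every sentence reduces to a Boolean combination of closed atomic formulas $\mathfrak{s}^n({\bf 0})<\mathfrak{s}^m({\bf 0})$ and $\mathfrak{s}^n({\bf 0})=\mathfrak{s}^m({\bf 0})$, each of which the axioms decide (true iff $n<m$, resp. $n=m$), so the theory proves the sentence or its negation and, being finite, does so effectively. The main obstacle I anticipate is precisely the bookkeeping around ${\bf 0}$: justifying that $\mathfrak{s}^\alpha$ surjects onto $\{y:\mathfrak{s}^\alpha({\bf 0})\leqslant y\}$ from $\texttt{O}_8^\circ$ and $\texttt{O}_9$, and correctly treating the degenerate lower/upper-bound cases that in the integer setting were absorbed by the no-endpoints axioms $\texttt{O}_5,\texttt{O}_6$.
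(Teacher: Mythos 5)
Your argument is correct, but there is nothing in the paper to compare it against: the authors explicitly decline to prove Theorem~\ref{thm-on}, deferring to \cite[Theorem~32A]{enderton} (``so we do not present its proof here''), which is why the statement already carries its QED mark. Your proof is thus a self-contained substitute, and it takes exactly the route one would expect from the paper's own treatment of $\mathbb{Z}$: it is the proof of Theorem~\ref{thm-oz} adapted to the presence of a least element. The adaptation is sound at the two places where it actually matters. First, after normalizing exponents, the substitution $y=\mathfrak{s}^{\alpha}(x)$ is no longer surjective, and you correctly replace it by the range condition $\mathfrak{s}^{\alpha}({\bf 0})\leqslant y$; this is provable from the axioms by a meta-level induction on $\alpha$, using $\texttt{O}_8^\circ$ for predecessors, $\texttt{O}_9$ (with $\texttt{O}_3$) for ${\bf 0}\leqslant y$, and the cancellation equivalences $[a<b]\leftrightarrow[\mathfrak{s}(a)<\mathfrak{s}(b)]$ and $[a=b]\leftrightarrow[\mathfrak{s}(a)=\mathfrak{s}(b)]$, which follow from $\texttt{O}_1$, $\texttt{O}_2$, $\texttt{O}_3$, $\texttt{O}_7$ exactly as in the integer case. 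Second, in the final inequality-only step you correctly observe that the case $\ell=0$ can no longer be absorbed by a no-endpoints axiom (as $\texttt{O}_8$ did in Theorem~\ref{thm-oz}) and must instead yield $\bigwedge_{j}{\bf 0}<U_j$, with ${\bf 0}$ as the minimal admissible witness. A reassuring consistency check: run on $\exists x\,(\mathfrak{s}(x)=y)$, your procedure outputs ${\bf 0}<y$, which is precisely the equivalence the paper records in the remark immediately following the theorem. If you write this up in full, spell out the induction on $\alpha$ for the range claim, and add one sentence deducing completeness for $\langle\mathbb{N};<\rangle$ itself from completeness for the definitional expansion $\langle\mathbb{N};<,\mathfrak{s},{\bf 0}\rangle$, since $\mathfrak{s}$ and ${\bf 0}$ are definable from the order.
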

Let us note that the structure $\langle\mathbb{N};<,\mathfrak{s} \rangle$ does not admit quantifier elimination, since e.g. the formula $\exists x (\mathfrak{s}(x)=y)$ is not equivalent with any quantifier-free formula in the language $\{<,\mathfrak{s}\}$. However, this formula is equivalent with ${\bf 0}<y$.

%%%%%%%%%%%%%%%%%%%%%%%%%%%%%%%%%%%%%%%%%%%%%%%%%%%%
%%%%%%%%%%%%%%%%%%%%%%%%%%%%%%%%%%%%%%%%%%%%%%%%%%%%
%%%%%%%%%%%%%%%%%%%%%%%%%%%%%%%%%%%%%%%%%%%%%%%%%%%%
%%%%%%%%%%%%%%%%%%%%%%%%%%%%%%%%%%%%%%%%%%%%%%%%%%%%
%%%%%%%%%%%%%%%%%%%%%%%%%%%%%%%%%%%%%%%%%%%%%%%%%%%%
\section{The  Additive Ordered Structures of Numbers}\label{sec-oa}
Here we study the structures of the sets $\mathbb{N},\mathbb{Z},\mathbb{Q},\mathbb{R}$ over the language $\{+,<\}$.
\begin{definition}[Some Group Theory]\label{def-g}
A {\em group} is a structure  $\langle G;\ast,{\sf e},\iota\rangle$ where $\ast$ is a binary operation on $G$, ${\sf e}$ is a constant (a special element of $G$) and $\iota$ is a unary operation on $G$ which satisfy the following axioms:

\begin{tabular}{l}
 $\forall x,y,z\,[x\ast (y\ast z)=(x\ast y)\ast z]$; \\
 $\forall x (x\ast {\sf e}=x)$; \\
 $\forall x (x\ast  \iota(x)={\sf e})$.
\end{tabular}

\noindent
It is called an {\em abelian} group when it also satisfies

\begin{tabular}{l}
 $\forall x,y (x\ast y=y\ast x)$.
\end{tabular}

\noindent
A group is called {\em non-trivial} when

\begin{tabular}{l}
 $\exists x (x\neq {\sf e})$;
\end{tabular}

\noindent and it is called {\em divisible} when for $n\in\mathbb{N}$ we have

\begin{tabular}{l}
 $\forall x\exists y[x=\ast^n(y)]$
\end{tabular}

\noindent
where $\ast^n(y)=y\ast\cdots\ast y \; (n-\textrm{times})$.

\noindent An {\em ordered group} is a group equipped with an order relation $<$ (which satisfies $\texttt{O}_1,\texttt{O}_2,\texttt{O}_3$) such that also the axiom

\begin{tabular}{l}
$\forall x,y,z(x\!<\!y \;\rightarrow\; x\ast z\!<\!y\ast z \;\wedge\; z\ast x\!<\!z\ast y)$
\end{tabular}

\noindent
is satisfied in it.
\qedef\end{definition}

\noindent
The following has been proved in e.g.
\cite[Corollary 3.1.17]{marker}:

\begin{theorem}[Axiomatizablity of $\langle\mathbb{R};<,+\rangle$ and $\langle\mathbb{Q};<,+\rangle$]\label{thm-oa}
The following infinite theory \textup{(}of non-trivial ordered divisible abelian groups\textup{)}
completely axiomatizes the order and additive theory of the   real and rational  numbers and, moreover, the structure $\langle\mathbb{R};<,+,-,{\bf 0}\rangle$ \textup{(}and also $\langle\mathbb{Q};<,+,-,{\bf 0}\rangle$\textup{)} admits quantifier elimination, and so its theory is decidable.

\begin{tabular}{l}
($\texttt{O}_1$) \; $\forall x,y(x<y\rightarrow y\not<x)$  \\ ($\texttt{O}_2$) \; $\forall x,y,z (x<y<z\rightarrow x<z)$ \\
($\texttt{O}_3$) \; $\forall x,y (x<y \vee x=y \vee y<x)$ \\
($\texttt{A}_1$) \; $\forall x,y,z\,(x+(y+z)=(x+y)+z)$ \\
($\texttt{A}_2$) \; $\forall x (x+\mathbf{0}=x)$ \\
($\texttt{A}_3$) \; $\forall x (x+ (-x)=\mathbf{0})$ \\
($\texttt{A}_4$) \; $\forall x,y (x+y=y+x)$ \\
($\texttt{A}_5$) \; $\forall x,y,z(x<y\rightarrow x+z<y+z)$ \\
($\texttt{A}_6$) \; $\exists y (y\neq {\bf 0})$ \\
($\texttt{A}_7$) \; $\forall x\exists y(x=n\centerdot y)$  \qquad \qquad \qquad  $n\in\mathbb{N}^+$  \\
\end{tabular}

\end{theorem}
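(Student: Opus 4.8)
The plan is to apply the Main Lemma of quantifier elimination (Remark~\ref{mainlem}) to the language $\{<,+,-,\mathbf{0}\}$. First I would normalize the terms: using the group axioms $\texttt{A}_1$--$\texttt{A}_4$, every term in this language is provably equal to a $\mathbb{Z}$-linear combination of variables, i.e.\ a term of the form $\sum_i m_i \centerdot x_i$ with $m_i \in \mathbb{Z}$ (where a negative coefficient uses the operation $-$). Consequently every atomic formula $s = t$ or $s < t$ can be rewritten, after moving all terms to one side via $\texttt{A}_5$, into the form $0 = \tau$ or $0 < \tau$ where $\tau$ is such a linear combination. By Remark~\ref{mainlem} we need only eliminate the existential quantifier from a conjunction of atomic and negated atomic formulas, but since $<$ is present the negations reduce to atomic formulas as noted there; thus I treat only conjunctions of equalities $0 = \tau$ and strict inequalities $0 < \tau$.

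Next I would isolate the quantified variable $x$. In each conjunct, collect the coefficient $c$ of $x$, writing the conjunct as $c \centerdot x \bowtie \rho$ where $\rho$ is an $x$-free linear term and ${\bowtie}\in\{=,<,>\}$ (the case $>$ arising when we solve $0 < \tau$ for a negative coefficient of $x$). The divisibility axioms $\texttt{A}_7$ are what let us divide: for a nonzero integer coefficient $c$, the axiom $\forall u \exists v\,(u = |c| \centerdot v)$ guarantees the group is divisible, so the substitution $x \mapsto x/|c|$ is legitimate and I can clear all coefficients of $x$ to be $\pm 1$. After this rescaling the formula~\eqref{f-1}-style shape is reached: a conjunction of lower bounds $L_i < x$, upper bounds $x < U_j$, and equalities $x = E_k$, exactly as in the proof of Theorem~\ref{thm-o}. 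If some equality $x = E_k$ is present ($n \neq 0$), substitute $E_0$ for $x$ throughout and drop the quantifier. Otherwise, if there are no bounds on one side ($\ell = 0$ or $m = 0$), the axioms $\texttt{A}_5$ together with $\texttt{A}_6$ (non-triviality, which via the ordered-group axioms forces the order to have no endpoints) yield $\top$; and if both lower and upper bounds occur, density—obtained from divisibility (halving) and $\texttt{A}_5$—gives equivalence to $\bigwedge_{i,j} L_i < U_j$, just as $\texttt{O}_4$ was used in Theorem~\ref{thm-o}.

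The decidability and completeness then follow from the general theory: quantifier elimination reduces every sentence to a Boolean combination of variable-free atomic sentences, and the only closed terms are multiples of $\mathbf 0$, so each such atomic sentence is decidably $\top$ or $\bot$; hence the theory is complete and, being recursively axiomatized, decidable.

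The main obstacle I expect is bookkeeping rather than conceptual: making precise the term-normalization (that divisibility lets one genuinely \emph{divide} by an integer within the theory, not merely in the standard model) and verifying that the rescaled bounds remain well-defined divisible images. In other words, the delicate point is that axiom $\texttt{A}_7$ must be invoked uniformly to reduce arbitrary integer coefficients of the quantified variable to $\pm1$ before the order-theoretic elimination of Theorem~\ref{thm-o} can be transplanted; once that reduction is in hand the remaining case analysis is a direct replay of the density argument. One should also confirm that without-endpoints is derivable (so that the $\ell=0$ and $m=0$ cases close), which follows since $\texttt{A}_6$ supplies an element $a \neq \mathbf 0$ and $\texttt{A}_5$ then propagates $a$ and $-a$ to arbitrarily large and small elements.
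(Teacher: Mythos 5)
Your proposal is correct and follows essentially the same route as the paper: normalize terms to integer multiples of the quantified variable plus an $x$-free part, use the provable scaling equivalences together with $\texttt{A}_7$ to reduce all coefficients of $x$ to a single fresh variable, then replay the dense-linear-order elimination of Theorem~\ref{thm-o}, deriving density and lack of endpoints from $\texttt{A}_5$, $\texttt{A}_6$, $\texttt{A}_7$. The only cosmetic difference is that you phrase the coefficient reduction as ``dividing'' by $|c|$, whereas the paper multiplies each conjunct up to a common coefficient $\alpha$ and then substitutes $y=\alpha\centerdot x$ — the same uniform maneuver, which you correctly flag as the delicate point.
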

\begin{proof}
Firstly, let us note that  $\texttt{O}_4$, $\texttt{O}_5$ and $\texttt{O}_6$ can be proved from the presented axioms: if $a<b$ then by $\texttt{A}_7$ there exists some $c$ such that $c+c=a+b$;  one can easily show that $a<c<b$ holds.  Thus $\texttt{O}_4$ is proved; for  $\texttt{O}_5$ note that for any ${\bf 0}<a$ we have $a<a+a$ by $\texttt{A}_5$. A dual argument can prove the axiom $\texttt{O}_6$. Also,  the     equivalences
\begin{itemize}\itemindent=5ex
\item[(i)] $[a<b] \leftrightarrow [n\centerdot a< n\centerdot b]$ and
\item[(ii)] $[a=b] \leftrightarrow [n\centerdot a=n\centerdot b]$
\end{itemize}
can be proved from the axioms: (i) follows from $\texttt{A}_5$ (with $\texttt{O}_1,\texttt{O}_2,\texttt{O}_3$) and (ii) follows from $\forall x (n\centerdot x={\bf 0}\rightarrow x={\bf 0})$ which is derived from $\texttt{A}_5$ (with $\texttt{O}_1,\texttt{O}_2,\texttt{O}_3$).

Secondly, every term containing $x$ is equal to $n\centerdot x + t$ for some $x$-free term $t$ and $n\!\in\!\mathbb{Z}\!-\!\{0\}$. So, every atomic formula containing $x$ is equivalent with $n\centerdot x \Box t$ where $\Box\!\in\!\{=,<,>\}$. Whence, by Remark~\ref{mainlem}, it suffices to prove the equivalence of the formula
\begin{equation}\label{f-5}
\exists x (\bigwedge\hspace{-2.15ex}\bigwedge_{i<\ell} t_i<p_i\centerdot x \wedge \bigwedge\hspace{-2.55ex}\bigwedge_{j<m} q_j\centerdot x <s_j \wedge \bigwedge\hspace{-2.35ex}\bigwedge_{k<n} r_k\centerdot x=u_k)
\end{equation}
with a quantifier-free formula.
  By the equivalences (i) and (ii) above
we can assume that $p_i$'s and $q_j$'s and $r_k$'s in
the formula~\eqref{f-5} are equal to each other, say to $\alpha$. Then by $\texttt{A}_7$ the formula~\eqref{f-5} is equivalent with
\begin{equation}\label{f-6}
\exists y (\bigwedge\hspace{-2.15ex}\bigwedge_{i<\ell} t_i'<y \wedge \bigwedge\hspace{-2.55ex}\bigwedge_{j<m} y<s_j' \wedge \bigwedge\hspace{-2.35ex}\bigwedge_{k<n} y=u_k')
\end{equation}
for some (possibly new) terms $t_i',s_j',u_k'$ (and   $y=\alpha\centerdot x$). Now, the quantifier of this formula can be eliminated just like the way that the quantifier of the formula~\eqref{f-1} was eliminated in the proof of Theorem~\ref{thm-o}.
\qed\end{proof}

\begin{remark}[\textbf{Infinite Axiomatizablity}]\label{rem-infq}
To see that %the theories of  the structures
$\langle\mathbb{R};<,+\rangle$ and $\langle\mathbb{Q};<,+\rangle$ are not finitely axiomatizable, it suffices to note that for a given natural number $N$, the set $\mathbb{Q}/N!=\{\dfrac{m}{(N!)^k}\mid m\in\mathbb{Z},k\in\mathbb{N}\}$ of rational numbers, where $N!=1\times 2\times 3\times \cdots \times N$,  is closed under addition  and so satisfies the axioms $\texttt{O}_1$, $\texttt{O}_2$, $\texttt{O}_3$, $\texttt{A}_1$, $\texttt{A}_2$, $\texttt{A}_3$, $\texttt{A}_4$, $\texttt{A}_5$,  $\texttt{A}_6$ and the finite number of the instances of the axiom $\texttt{A}_7$ (for $n=1,\cdots,N$) but does not satisfy the instance of $\texttt{A}_7$ for $n={p}$ where ${p}$ is a prime number larger than $N!$.
\qedef\end{remark}

For  eliminating the quantifiers of the formulas of the structure  $\langle\mathbb{Z};<,+\rangle$ we add the (binary) congruence relations $\{\equiv_{n}\}_{n\geqslant 2}$ (modulo  standard natural numbers) to the language; let us note that $a\equiv_n b$ is equivalent with $\exists x (a+n\centerdot x = b)$.
About these congruence relations the following Generalized Chinese Remainder Theorem will be useful later; below we present a proof of this theorem from \cite{frankel}.

\begin{proposition}[Generalized Chinese Remainder]\label{crt}
For  integers $n_0,n_1,\cdots,n_k\geqslant 2$ and $t_0,t_1,\cdots,t_k$ there exists some $x$ such that  $x\equiv_{n_i} t_i$ for $i=0,\cdots,k$ if and only if  $t_i \equiv_{d_{i,j}} t_j$ holds for each $0\leqslant i<j\leqslant k$, where $d_{i,j}$ is the greatest common divisor of $n_i$ and $n_j$.
\end{proposition}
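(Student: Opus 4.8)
The plan is to prove both directions of the biconditional, with the forward (necessity) direction being routine and the backward (sufficiency) direction carrying the real content. For necessity, suppose $x$ satisfies $x\equiv_{n_i} t_i$ for all $i$. Fix any pair $i<j$. Since $d_{i,j}=\gcd(n_i,n_j)$ divides both $n_i$ and $n_j$, the congruences $x\equiv_{n_i}t_i$ and $x\equiv_{n_j}t_j$ immediately yield $x\equiv_{d_{i,j}}t_i$ and $x\equiv_{d_{i,j}}t_j$; transitivity of congruence modulo $d_{i,j}$ then gives $t_i\equiv_{d_{i,j}}t_j$, which is exactly the stated compatibility condition.

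For sufficiency I would argue by induction on $k$, the number of moduli (beyond the first). The base case $k=0$ (a single congruence $x\equiv_{n_0}t_0$) is trivial, as is the classical two-congruence case, which I would either treat as the first inductive step or establish separately as the heart of the argument. The induction hypothesis provides some $y$ simultaneously solving the first $k$ congruences $x\equiv_{n_i}t_i$ for $i=0,\ldots,k-1$. The task is then to merge this single congruence on $y$ modulo $\mathrm{lcm}(n_0,\ldots,n_{k-1})$ with the last congruence $x\equiv_{n_k}t_k$, using the two-modulus case. The two-modulus case itself reduces, via a B\'ezout identity, to solving $x\equiv_{n_i}t_i$ and $x\equiv_{n_j}t_j$: writing $d_{i,j}=u\,n_i+v\,n_j$, one checks that $x = t_i + u\,n_i\,\tfrac{t_j-t_i}{d_{i,j}}$ is a solution precisely when $d_{i,j}\mid (t_j-t_i)$, i.e.\ when $t_i\equiv_{d_{i,j}}t_j$, and that its residue is determined modulo $\mathrm{lcm}(n_i,n_j)$.

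The main obstacle is verifying that the compatibility conditions are preserved when passing to the inductive step: after replacing the first $k$ congruences by the single congruence modulo $L=\mathrm{lcm}(n_0,\ldots,n_{k-1})$, I must confirm that this merged congruence is compatible with $x\equiv_{n_k}t_k$ in the sense required to apply the two-modulus case, namely that $y\equiv_{\gcd(L,n_k)}t_k$. Here the key number-theoretic fact is that $\gcd\bigl(\mathrm{lcm}(n_0,\ldots,n_{k-1}),\,n_k\bigr)=\mathrm{lcm}\bigl(\gcd(n_0,n_k),\ldots,\gcd(n_{k-1},n_k)\bigr)$, so that the single hypothesis $y\equiv_{\gcd(L,n_k)}t_k$ follows from the collection of pairwise hypotheses $t_i\equiv_{d_{i,k}}t_k$ together with $y\equiv_{n_i}t_i$. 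Establishing this gcd–lcm distributivity identity, and then assembling the residues via the Chinese-remainder-style combination across all primes dividing the moduli, is where the bookkeeping concentrates; I would carry it out by comparing $p$-adic valuations of both sides for each prime $p$.
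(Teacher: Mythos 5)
Your proposal is correct and follows essentially the same route as the paper's proof: induction on $k$, a B\'ezout-based construction for the two-modulus case, and the key identity $\gcd\bigl(\mathrm{lcm}(n_0,\ldots,n_{k-1}),n_k\bigr)=\mathrm{lcm}\bigl(\gcd(n_0,n_k),\ldots,\gcd(n_{k-1},n_k)\bigr)$ used to verify compatibility in the inductive step. The only differences are presentational: you invoke the two-modulus case as a black box to merge the congruence modulo the lcm with the final congruence, whereas the paper re-runs the B\'ezout construction inline, and the paper simply asserts the gcd--lcm identity that you propose to verify via $p$-adic valuations.
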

\begin{proof} The `only if' part is easy. We prove the `if' part by induction on $k$. For $k=0$ there is nothing to prove, and for $k=1$ we note that by B\'{e}zout's Identity there are $a_0,a_1$ such that $a_0n_0 + a_1n_1 = d_{0,1}$. Also, by the assumption there exists some $c$ such that $t_0-t_1=cd_{0,1}$. Now, if we take $x$ to be $a_0(n_0/d_{0,1})t_1 + a_1(n_1/d_{0,1})t_0$ then we have $x=t_0-a_0n_0c$ and  $x=t_1+a_1n_1c$ so  $x\equiv_{n_0}t_0$ and $x\equiv_{n_1}t_1$ hold. For the induction step ($k+1$) suppose that $x\equiv_{n_i} t_i$ holds for $i=0,\cdots,k$ (and that $t_i \equiv_{d_{i,j}} t_j$ holds for each $0\leqslant i<j\leqslant k+1$). Let $n$ be the least common multiplier of $n_0,\cdots,n_k$; then the greatest common divisor  $m$  of $n$ and $n_{k+1}$ is the least common multiplier of $d_{0,k+1},\cdots,d_{k,k+1}$. Now $x\equiv_{d_{i,k+1}}t_i$ holds for $0\leqslant i\leqslant k$ and so by the assumption $t_i\equiv_{d_{i,k+1}}t_{k+1}$ we have $x\equiv_{d_{i,k+1}}t_{k+1}$ (for $i=0,\cdots,k$). Therefore, $x\equiv_m t_{k+1}$ and so $x-t_{k+1}=mc$ for some $c$. By B\'{e}zout's Identity there are $a,b$ such that $an+bn_{k+1}=m$. Now, for $y=x-anc$ we have $y=t_{k+1}+bn_{k+1}c\equiv_{n_{k+1}}t_{k+1}$ and also $y\equiv_{n_i}x\equiv_{n_i}t_i$ holds for each $0\leqslant i\leqslant k$. This proves the desired conclusion.
\qed\end{proof}

The following theorem
 has been proved, in various formats, in e.g.  \cite[Chapter 24]{bbj}, \cite[Theorem 32E]{enderton}, \cite[Corollary 2.5.18]{hinman}, \cite[Secion III, Chapter 4]{kk}, \cite[Corollary 3.1.21]{marker}, \cite[Theorem 13.10]{monk} and  \cite[Section 4, Chapter III]{smorynski}. Here, we present a slightly different proof.
\begin{theorem}[Axiomatizablity of $\langle\mathbb{Z};<,+\rangle$]\label{thm-oaz}
The  infinite theory of non-trivial discretely ordered abelian groups with the division algorithm, that is $\texttt{O}_1$, $\texttt{O}_2$, $\texttt{O}_3$, $\texttt{A}_1$, $\texttt{A}_2$, $\texttt{A}_3$, $\texttt{A}_4$, $\texttt{A}_5$ and

\begin{tabular}{l}
($\texttt{O}_7^\circ$) \; $\forall x,y \big(x<y\leftrightarrow x+{\bf 1}\leqslant y\big)$ \\
($\texttt{A}_7^\circ$) \; $\forall x\exists  y\,\big(\bigvee\hspace{-1.5ex}\bigvee_{i<n}x=n\centerdot y + \bar{i}\,\big)$  \qquad \, \quad \, \qquad  $n\in\mathbb{N}^+$  \\
where $\bar{i}={\bf 1}+\cdots+{\bf 1}$ ($i-$\text{times})
\end{tabular}

\noindent
completely axiomatizes the order and additive theory of the   integer  numbers and, moreover, the \textup{(}theory of the\textup{)} structure $\langle\mathbb{Z};<,+,-,{\bf 0},{\bf 1},\{\equiv_n\}_{n\geqslant 2}\rangle$  admits quantifier elimination,   so has a decidable theory.
\end{theorem}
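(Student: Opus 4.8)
The plan is to establish quantifier elimination for $\langle\mathbb{Z};<,+,-,\mathbf{0},\mathbf{1},\{\equiv_n\}_{n\geqslant 2}\rangle$ by invoking the Main Lemma of Remark~\ref{mainlem}, following the same template as the proofs of Theorems~\ref{thm-oa} and~\ref{thm-oaz}'s predecessors, but now the extra relation symbols $\equiv_n$ force us to carry the negated atomic formulas as well. First I would record the normal form of terms and atomic formulas: every term is equivalent (using $\texttt{A}_1$--$\texttt{A}_4$ and the defining relation $a\equiv_n b \leftrightarrow \exists x(a+n\centerdot x=b)$) to $k\centerdot x + t$ with $t$ an $x$-free term and $k\in\mathbb{Z}$, so every literal containing $x$ reduces to one of $k\centerdot x \mathrel{\Box} t$ with $\Box\in\{<,>,=,\neq\}$ or to a (possibly negated) congruence $k\centerdot x \equiv_n t$. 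Thus by Remark~\ref{mainlem} it suffices to eliminate the quantifier from a conjunction of such literals prefixed by $\exists x$.

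Next I would normalize the coefficients of $x$. Using the provable equivalences $[a<b]\leftrightarrow[c\centerdot a < c\centerdot b]$ and $[a=b]\leftrightarrow[c\centerdot a=c\centerdot b]$ for $c\in\mathbb{N}^+$ (which follow from $\texttt{A}_5$ together with $\texttt{O}_1,\texttt{O}_2,\texttt{O}_3$, exactly as in Theorem~\ref{thm-oa}), and the analogous fact that $a\equiv_n b \leftrightarrow c\centerdot a \equiv_{cn} c\centerdot b$, I would multiply through so that $x$ occurs everywhere with a single common coefficient $\alpha$, and replace $\alpha\centerdot x$ by a fresh variable $y$ ranging over the subgroup $\alpha\centerdot\mathbb{Z}$. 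This last point is the subtlety absent from the divisible case: unlike in $\mathbb{Q}$ or $\mathbb{R}$, the map $x\mapsto\alpha\centerdot x$ is not surjective, so after the substitution I must add the side-condition that $y$ lies in the image, expressible as the congruence $y\equiv_{\alpha}\mathbf{0}$. After this step the existential formula has the shape
\begin{equation*}
\exists y\Big(\bigwedge\hspace{-2.15ex}\bigwedge_{i<\ell} t_i'<y \wedge \bigwedge\hspace{-2.55ex}\bigwedge_{j<m} y<s_j' \wedge \bigwedge\hspace{-2.35ex}\bigwedge_{k<n} y=u_k' \wedge \bigwedge_{p} y\equiv_{m_p} v_p \wedge \bigwedge_{q} y\not\equiv_{m_q'} w_q\Big).
\end{equation*}

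The heart of the argument is eliminating $\exists y$ from this congruence-constrained interval. If an equality $y=u_k'$ is present, I substitute $u_0'$ for $y$ everywhere and we are done with a quantifier-free formula, as in the earlier theorems. Otherwise I handle the order constraints via $\texttt{O}_7^\circ$ (the division-algorithm axiom, guaranteeing discreteness and the existence of $\mathbf{1}$ as the least positive element): the band of candidates strictly between the largest lower bound $\max_i t_i'$ and the smallest upper bound $\min_j s_j'$ is nonempty iff the interval, after stepping by $\mathbf{1}$, can accommodate a witness satisfying all the congruences. The key observation is that the combined congruence system has modulus $M=\mathrm{lcm}$ of all the $m_p,m_q'$, so the satisfiability of $\bigwedge_p y\equiv_{m_p}v_p$ is governed by Proposition~\ref{crt} (Generalized Chinese Remainder), and is periodic with period $M$; hence if the open interval $(\max t_i', \min s_j')$ has length at least $M$ it always contains a solution, while for shorter intervals one checks the finitely many residues $y=\max_i t_i' + \bar{r}$ for $1\leqslant r\leqslant M$ explicitly. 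I expect the main obstacle to be bookkeeping the inequality-and-congruence interaction cleanly: one must express ``there exists a witness in the interval meeting all residue constraints'' as a genuinely quantifier-free disjunction over the finitely many offsets $\bar r$, each conjunct being a comparison $\max_i t_i' + \bar r < \min_j s_j'$ (itself a conjunction of the $\ell\cdot m$ inequalities $\mathfrak{}t_i' + \bar r < s_j'$) together with the congruences evaluated at that offset, and to verify via Proposition~\ref{crt} that the $y\not\equiv$ literals and the image-condition $y\equiv_\alpha\mathbf{0}$ are absorbed correctly. Once this disjunction is shown equivalent to the original formula under the axioms, quantifier elimination is complete, and decidability follows since the quantifier-free sentences over $\{<,+,-,\mathbf{0},\mathbf{1},\{\equiv_n\}\}$ are effectively evaluable.
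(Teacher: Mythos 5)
Your normalization phase coincides with the paper's: reduce every literal containing $x$ to the form $k\centerdot x\mathrel{\Box} t$ or a congruence, equalize the coefficients via the provable equivalences $[a<b]\leftrightarrow[c\centerdot a<c\centerdot b]$, $[a=b]\leftrightarrow[c\centerdot a=c\centerdot b]$, $[a\equiv_m b]\leftrightarrow[c\centerdot a\equiv_{cm}c\centerdot b]$, substitute $y=\alpha\centerdot x$ together with the image condition $y\equiv_\alpha\mathbf{0}$, and dispose of any equation by substitution. From there the two arguments genuinely diverge. The paper first erases negated congruences altogether via $(a\not\equiv_n b)\leftrightarrow\bigvee_{0<i<n}(a\equiv_n b+\bar i\,)$, and then shrinks the problem literal by literal: two lower (or upper) bounds reduce to one by a disjunction on which is larger, and two congruences merge into a single one by the B\'ezout construction underlying Proposition~\ref{crt}, at the price of the quantifier-free compatibility condition $t_0\equiv_d t_1$; the residual one-congruence-one-interval formula is then eliminated by the explicit equivalence $\exists y(r<n\centerdot y\leqslant s)\leftrightarrow\bigvee_{i<n}(s\equiv_n\bar i\,\wedge\, r+\bar i<s)$. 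You instead keep all bounds and all congruence literals (negated ones included) and eliminate $\exists y$ in one stroke by periodicity modulo $M=\operatorname{lcm}$ of all the moduli, testing the $M$ successors of the greatest lower bound; this is the Presburger--Cooper test-point method rather than the paper's CRT-merging method. Your route is sound and is provable from the stated axioms: the key fact, that any witness above the greatest lower bound $t$ can be pushed down to one of $t+\bar 1,\dots,t+\bar M$ without changing any residue, follows from $\texttt{A}_7^\circ$ (applied with modulus $M$) together with $\texttt{O}_7^\circ$. What the paper's route buys is small formulas and a clean one-literal-at-a-time reduction; what yours buys is uniformity, since negated congruences need no preprocessing and the Chinese Remainder Theorem is invoked only implicitly.

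Three points need tightening before your sketch is a proof. First, $\max_i t_i'$ is not a term of the language: the comparison $\max_i t_i'+\bar r<\min_j s_j'$ does unwind to $\bigwedge_{i,j}(t_i'+\bar r<s_j')$ as you say, but the congruences cannot be ``evaluated at the max'' this way; you must add an outer disjunction $\bigvee_{i_0<\ell}$ choosing the anchor, i.e.\ test every candidate $t_{i_0}'+\bar r$ against \emph{all} literals (the resulting disjunction is still equivalent to the existential formula, since the true maximum furnishes a successful anchor, and conversely any successful candidate is itself a witness). Second, your claim that an interval of length at least $M$ ``always contains a solution'' holds only when the congruence system is satisfiable; the offset disjunction you write down is correct regardless, but the justifying sentence should be conditioned. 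Third, the unbounded cases---no lower bounds, or no upper bounds, or neither---are not covered by an anchor at a lower bound and need their own clause: there a solution exists iff the congruence system is satisfiable, which is the quantifier-free formula $\bigvee_{c<M}\big(\bigwedge_{p}\bar c\equiv_{m_p}v_p\wedge\bigwedge_{q}\bar c\not\equiv_{m_q'}w_q\big)$; the paper dispatches this case (after its reductions) by the remark that any single congruence has arbitrarily large and arbitrarily small solutions.
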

\begin{proof}
The axiom $\texttt{A}_7^\circ$ is  equivalent with %the formula
 $\forall x\bigvee\hspace{-1.5ex}\bigvee_{i<n} \big(x\equiv_n\bar{i}\wedge \bigwedge\hspace{-1.5ex}\bigwedge_{i\neq j<n}x\not\equiv_n\bar{j}\big),$
and so the negation signs behind the congruences can be eliminated by the following equivalence:    $(a\not\equiv_n b) \leftrightarrow \bigvee\hspace{-1.5ex}\bigvee_{0<i<n} (a\equiv_n b+\bar{i}\,)$.
Whence, by Remark~\ref{mainlem}, it suffices to show the equivalence of the    formula
\begin{equation}\label{r-3}
\exists x (\bigwedge\hspace{-2.35ex}\bigwedge_{i<m} a_i\centerdot x\equiv_{n_i}t_i \;\wedge\;   \bigwedge\hspace{-2.35ex}\bigwedge_{j<p} u_j\!<\!b_j\centerdot x \;\wedge\;  \bigwedge\hspace{-2.3ex}\bigwedge_{k<q} c_k\centerdot x\!<\!v_k \;\wedge\;
\bigwedge\hspace{-2.25ex}\bigwedge_{\ell<r} d_\ell\centerdot x=w_\ell)
\end{equation}
with some quantifier-free formula, where $a_i$'s, $b_j$'s, $c_k$'s and $d_\ell$'s are natural numbers and $t_i$'s, $u_j$'s,  $v_k$'s and $w_\ell$'s are $x$-free terms. By the equivalences
\begin{itemize}\itemindent=5ex
\item[(i)] $[a<b] \leftrightarrow [n\centerdot a< n\centerdot b]$,
\item[(ii)] $[a=b] \leftrightarrow [n\centerdot a=n\centerdot b]$ and
\item[(iii)]  $[a\equiv_m b] \leftrightarrow [n\centerdot a\equiv_{nm}n\centerdot b]$
\end{itemize}
which are provable from the axioms, we can assume that $a_i$'s, $b_j$'s, $c_k$'s and $d_\ell$'s  in
the formula~\eqref{r-3} are equal to each other, say to $\alpha$. Now, \eqref{r-3} is equivalent with
\begin{equation}\label{r-4}
\exists y (y\equiv_\alpha{\bf 0} \;\wedge\; \bigwedge\hspace{-2.35ex}\bigwedge_{i<m} y\equiv_{n_i}t_i' \;\wedge\;   \bigwedge\hspace{-2.35ex}\bigwedge_{j<p} u_j'\!<\!y \;\wedge\;  \bigwedge\hspace{-2.3ex}\bigwedge_{k<q} y\!<\!v_k' \;\wedge\;
  \bigwedge\hspace{-2.25ex}\bigwedge_{\ell<r} y=w_\ell')
\end{equation}
for $y=\alpha\centerdot x$ and some (possibly new) terms $t_i'$'s, $u_j'$'s, $v_k'$'s and $w_\ell'$'s. If $r\neq 0$ then \eqref{r-4} is readily equivalent with the  quantifier-free formula which results from  substituting $w_0'$ with $y$.
So, it suffices to eliminate the quantifier of
\begin{equation}\label{r-5}
\exists x (\bigwedge\hspace{-2.35ex}\bigwedge_{i<m} x\equiv_{n_i}t_i \;\wedge\;   \bigwedge\hspace{-2.35ex}\bigwedge_{j<p} u_j\!<\!x \;\wedge\;  \bigwedge\hspace{-2.3ex}\bigwedge_{k<q} x\!<\!v_k).
\end{equation}
By the equivalence of the formula $\exists x(\theta(x)\wedge u_0\!<\!x \wedge u_1\!<\!x)$ with the following  formula
$\big[\exists x (\theta(x)\!\wedge\!u_0\!<\!x)\!\wedge\!u_1\!\leqslant\!u_0\big]
\!\vee\!\big[\exists x (\theta(x)\!\wedge\!u_1\!<\!x)\!\wedge\!u_0\!\leqslant\!u_1\big]$
we can assume that $p\leqslant 1$ (and   $q\leqslant 1$ by a dual argument).
Also,   $\exists x(\theta(x)\wedge x\equiv_{n_0}t_0  \wedge x\equiv_{n_1}t_1)$ is equivalent with
$\exists x(\theta(x)\wedge x\equiv_{n}t)  \wedge t_0\equiv_{d}t_1$ where $d$ is the greatest common divisor of $n_1$ and $n_2$, $n$ is their least common multiplier, and  $t=a_0(n_0/d)t_1 + a_1(n_1/d)t_0$ where $a_0,a_1$ satisfy B\'{e}zout's Identity $a_0n_0 + a_1n_1 = d$ (see the proof of Proposition~\ref{crt}). So, we can assume that $m\leqslant 1$ as well. Now, if $m=0$ then the formula~\eqref{r-5} is equivalent with a quantifier-free formula by Theorem~\ref{thm-oz} (with $\mathfrak{s}(x)=x+{\bf 1}$ just like the the way formula~\eqref{f-4} was equivalent with some quantifier-free formula). So, suppose that $m=1$. In this case, if any of $p$ or $q$ is equal to $0$ then \eqref{r-5} is equivalent with $\top$ (since any congruence can have infinitely large or infinitely small solutions). Finally, if $p=q=1=m$ then the formula
$
\exists x (x\equiv_{n}t \;\wedge\;   u\!<\!x \;\wedge\;  x\!<\!v)
$
is equivalent with $\exists y (r<n\centerdot y\leqslant s)$ for $x=t+n\centerdot y$, $r=u-t$ and $s=v-t-{\bf 1}$. Now,
$\exists y (r<n\centerdot y\leqslant s)$  is   equivalent with   the quantifier-free formula $\bigvee\hspace{-2.15ex}\bigvee_{i<n}(s\equiv_n \bar{i} \;\wedge\;  r+\bar{i}<s)$ since by the division algorithm there are some $q$ and some $i<n$ such that $s=qn + i$. The existence of some $y$ such that $r<ny\leqslant s$   is then equivalent with   $r<nq$ ($=s-i$).
\qed\end{proof}
\begin{remark}[\textbf{Infinite Axiomatizablity}]\label{rem-infz}
The theory of the structure $\langle\mathbb{Z};<,+\rangle$ cannot be axiomatized finitely, because $\texttt{O}_1$, $\texttt{O}_2$, $\texttt{O}_3$, $\texttt{A}_1$, $\texttt{A}_2$, $\texttt{A}_3$, $\texttt{A}_4$, $\texttt{A}_5$, $\texttt{O}_7^\circ$ and any finite number of the instances of $\texttt{A}_7^\circ$ cannot prove all the instances of $\texttt{A}_7^\circ$. To see this take $\mathfrak{p}$ to be a sufficiently large prime number and put $N=(\mathfrak{p}-1)!$. Let us recall that the set   $\mathbb{Q}/N=\{m/N^k\mid m\in\mathbb{Z},k\in\mathbb{N}\}$ of rational numbers is closed under addition and the operation $x\mapsto x/n$ for any $1<n<\mathfrak{p}$. Let $\mathcal{A}=(\mathbb{Q}/N)\times\mathbb{Z}$ and define  the structure $\mathfrak{A}=\langle\mathcal{A};<_\mathfrak{A},+_\mathfrak{A},
-_\mathfrak{A},{\bf 0}_\mathfrak{A},{\bf 1}_\mathfrak{A}\rangle$ by  the following:
\begin{itemize}\itemindent=2em
\item[$(<_\mathfrak{A})$:] $(a,\ell)<_\mathfrak{A}(b,m)\iff (a<b)\vee (a=b\wedge \ell<m)$;
\item[$(+_\mathfrak{A})$:] $(a,\ell)+_\mathfrak{A}(b,m)=(a+b,\ell+m)$;
\item[$(-_\mathfrak{A})$:] $-_\mathfrak{A}(a,\ell)=(-a,-\ell)$;
\item[$({\bf 0}_\mathfrak{A})$:] ${\bf 0}_\mathfrak{A}=(0,0)$;
\item[$({\bf 1}_\mathfrak{A})$:] ${\bf 1}_\mathfrak{A}=(0,1)$.
\end{itemize}
It is straightforward to see that $\mathfrak{A}$ satisfies the axioms $\texttt{O}_1$, $\texttt{O}_2$, $\texttt{O}_3$, $\texttt{A}_1$, $\texttt{A}_2$, $\texttt{A}_3$, $\texttt{A}_4$, $\texttt{A}_5$ and $\texttt{O}_7^\circ$; but does not satisfy $\texttt{A}_7^\circ$ for $n=\mathfrak{p}$ since we have    $(1,0)=\mathfrak{p}\centerdot(a,\ell)+\bar{i}$ for any element   $a\in\mathbb{Q}/N,\ell\in\mathbb{Z},i\in\mathbb{N}$ (with $i<\mathfrak{p}$) implies that $a=1/\mathfrak{p}$ but $1/\mathfrak{p}\not\in\mathbb{Q}/N$. However, $\mathfrak{A}$ satisfies the finite number of the instances of $\texttt{A}_7^\circ$ (for any $1<n<\mathfrak{p}$): for any $(a,\ell)\in\mathcal{A}$ we have $a=m/N^k$ for some $m\in\mathbb{Z}$, $k\in\mathbb{N}$, and $\ell=nq+r$ for  some $q,r$ with $0\leqslant r<n$; now, $(a,\ell)=n\centerdot \big(m'/N^{k+1},q\big)+_\mathfrak{A}(0,r)$ (where $m'=m\cdot (N/n)\in\mathbb{Z}$) and so $(a,\ell)=n\centerdot \big(m'/N^{k+1},q\big)+_\mathfrak{A}\bar{r}$ (where $\bar{r}={\bf 1}_\mathfrak{A}+_\mathfrak{A}\cdots+_\mathfrak{A}{\bf 1}_\mathfrak{A}$ for $r$ times).
\qedef\end{remark}
\begin{remark}[$\mathbf{\langle\mathbb{N};<,+\rangle}$]\label{rem-noa}
Since $\mathbb{N}$ is definable in the structure $\langle\mathbb{Z};<,+\rangle$  by %the formula
$``x\in\mathbb{N}"\iff\exists y (y\!+\!y\!=\!y\wedge y\leqslant x)$,  we do not study %the structure
 $\langle\mathbb{N};<,+\rangle$ separately (see \cite[Theorem~32E]{enderton}). In fact the decidability of $\langle\mathbb{Z};<,+\rangle$ implies the decidability of $\langle\mathbb{N};<,+\rangle$: relativization $\psi^{\mathbb{N}}$ of a $\{<,+\}$-formula $\psi$ resulted  from substituting any subformula of the form $\forall x\theta(x)$ by $\forall x [``x\in\mathbb{N}"\!\rightarrow\!\theta(x)]$ and $\exists x\theta(x)$ by $\exists x [``x\in\mathbb{N}"\!\wedge\!\theta(x)]$   has the following property: $\langle\mathbb{N};<,+\rangle\models\psi \iff \langle\mathbb{Z};<,+\rangle\models\psi^{\mathbb{N}}$.
\qedef\end {remark}

%%%%%%%%%%%%%%%%%%%%%%%%%%%%%%%%%%%%%%%%%%%%%%%%%%%%
%%%%%%%%%%%%%%%%%%%%%%%%%%%%%%%%%%%%%%%%%%%%%%%%%%%%
%%%%%%%%%%%%%%%%%%%%%%%%%%%%%%%%%%%%%%%%%%%%%%%%%%%%
%%%%%%%%%%%%%%%%%%%%%%%%%%%%%%%%%%%%%%%%%%%%%%%%%%%%
%%%%%%%%%%%%%%%%%%%%%%%%%%%%%%%%%%%%%%%%%%%%%%%%%%%%
\section{The Multiplicative Ordered  Structures of Numbers}\label{sec-om}
In this final section we consider the theories of the number sets $\mathbb{N},\mathbb{Z},\mathbb{R}$ and $\mathbb{Q}$ in the language $\{<,\times\}$.
\subsection{\bf Natural Numbers with Order and Multiplication}\label{subsec-n}
The theory of the structure $\langle\mathbb{N};<,\times\rangle$ is not decidable (and so  no computably enumerable set of sentences can axiomatize this structure). This is because:
\begin{itemize}
\item[$\bullet$] The addition operation is definable in $\langle\mathbb{N};<,\times\rangle$, since
    \begin{itemize}
    \item[$\circ$] the successor operation $\mathfrak{s}$ is definable from order:
        \newline{$y\!=\!\mathfrak{s}(x) \iff x\!<\!y \wedge \neg\exists z (x\!<\!z\!<\!y),$}
    \item[$\circ$] and the addition operation is definable from the successor and multiplication:
        \newline{\qquad $z\!=\!x\!+\!y \quad \iff\quad  \big[\neg\exists u (\mathfrak{s}(u)\!=\!z)\wedge x\!=\!y\!=\!z\big] \vee$}
        \newline{\qquad $\big[\exists u (\mathfrak{s}(u)\!=\!z) \wedge \mathfrak{s}(z\cdot x)\cdot \mathfrak{s}(z\cdot y)=
\mathfrak{s}(z\cdot z\cdot \mathfrak{s}(x\cdot y))\big].$}
    \end{itemize}
This identity was first introduced by Robinson~\cite{robinson}; also see e.g.   \cite[Chapter 24]{bbj} or \cite[Exercise 2 on page 281]{enderton}.
\item[$\bullet$] Thus the structure $\langle\mathbb{N};<,\times\rangle$ can interpret  the structure  $\langle\mathbb{N};+,\times\rangle$ whose theory is  undecidable (see e.g. \cite[Theorem 17.4]{bbj}, \cite[Corollary 35A]{enderton}, \cite[Theorem 4.1.7]{hinman}, \cite[Chapter 15]{monk} or \cite[Corollary 6.4 in Chapter III]{smorynski}).
\end{itemize}

\subsection{\bf Integer Numbers with Order and Multiplication}\label{subsec-z}
The undecidability of the theory of the structure $\langle\mathbb{N};+,\times\rangle$ also implies the undecidability of the theories of the structures
$\langle\mathbb{Z};+,\times\rangle$ and $\langle\mathbb{Z};<,\times\rangle$ as follows:
\begin{itemize}
\item[$\bullet$] By Lagrange's Four Square Theorem (see e.g. \cite[Theorem 16.6]{monk}) $\mathbb{N}$ is definable in $\langle\mathbb{Z};+,\times\rangle$, and so $\langle\mathbb{Z};+,\times\rangle$ has an undecidable theory (see e.g.  \cite[Theorem 16.7]{monk} or \cite[Corollary 8.29 in Chapter III]{smorynski}).
\item[$\bullet$]
The following numbers and  operations   are definable in the structure $\langle\mathbb{Z};<,\times\rangle$:
   \begin{itemize}
   \item The number zero: $u=0 \iff \forall x (x\cdot u = u)$.
   \item The number one: $u=1 \iff \forall x (x\cdot u =x)$.
   \item The number $\texttt{-1}$: $u=-1 \iff u\cdot u=1 \wedge u\neq 1$.
   \item  The additive inverse: $y=-x \iff y=(-1)\cdot x$.
   \item The successor: $y=\mathfrak{s}(x) \iff x<y \wedge \neg\exists z (x<z<y)$.
   \item The addition: $z=x+y \iff [z=0\wedge y=-x] \quad \vee$

\hfill
$[z\neq 0\wedge \mathfrak{s}(z\cdot x)\cdot \mathfrak{s}(z\cdot y)=
\mathfrak{s}(z\cdot z\cdot \mathfrak{s}(x\cdot y))]$.
   \end{itemize}
There is another
beautiful definition for $+$ in terms of $\mathfrak{s}$ and $\times$ in $\mathbb{Z}$ at   \cite[p.~187]{hinman}:

  $z=x+y \iff$

\hfill
$[z\cdot\mathfrak{s}(z)=z\wedge\mathfrak{s}(x\cdot y)=\mathfrak{s}(x)\cdot\mathfrak{s}(y)]\vee[z\cdot\mathfrak{s}(z)\neq z\wedge \mathfrak{s}(z\cdot x)\cdot \mathfrak{s}(z\cdot y)=
\mathfrak{s}(z\cdot z\cdot \mathfrak{s}(x\cdot y))]$.
\item[$\bullet$] Whence, the structure $\langle\mathbb{Z};<,\times\rangle$ can interpret the undecidable  structure $\langle\mathbb{Z};+,\times\rangle$.
\end{itemize}

\subsection{\bf Real Numbers with Order and Multiplication}\label{subsec-r}
The structure $\langle\mathbb{R};<,\times\rangle$ is decidable, since by a theorem of Tarski the (theory of the)
structure $\langle\mathbb{R};<,+,\times\rangle$ can be completely axiomatized by the theory of {\em real closed ordered fields}, and so has a decidable theory; see e.g. \cite[Theorem 7, Chapter 4]{kk}, \cite[Theorem 3.3.15]{marker} or \cite[Theorem 21.36]{monk}. Here, we prove the decidability of the theory of $\langle\mathbb{R};<,\times\rangle$  directly (without using Tarski's theorem) and provide an explicit axiomatization for it.  Before that let us make a little note about the theory $\langle\mathbb{R}^+;<,\times\rangle$ (of the positive real numbers) which is isomorphic to $\langle\mathbb{R};<,+\rangle$ by the mapping $x\mapsto\log(x)$. Thus, we have  the following immediate corollary of Theorem~\ref{thm-oa}:

\begin{proposition}[Axiomatizablity of $\langle\mathbb{R}^+;<,\times\rangle$]\label{thm-or+}
The following infinite theory \textup{(}of non-trivial ordered divisible abelian groups\textup{)}
completely axiomatizes the order and multiplicative theory of the positive   real  numbers and, moreover, %the structure
$\langle\mathbb{R}^+;<,\times,\square^{-1},{\bf 1}\rangle$  admits quantifier elimination, and so its theory is decidable.

\begin{tabular}{l}
($\texttt{O}_1$) \; $\forall x,y(x<y\rightarrow y\not<x)$  \\ ($\texttt{O}_2$) \; $\forall x,y,z (x<y<z\rightarrow x<z)$ \\
($\texttt{O}_3$) \; $\forall x,y (x<y \vee x=y \vee y<x)$ \\
($\texttt{M}_1$) \; $\forall x,y,z\,(x\cdot(y\cdot z)=(x\cdot y)\cdot z)$ \\
($\texttt{M}_2$) \; $\forall x (x\cdot\mathbf{1}=x)$ \\
($\texttt{M}_3$) \; $\forall x (x\cdot  x^{-1}=\mathbf{1})$ \\
($\texttt{M}_4$) \; $\forall x,y (x\cdot y=y\cdot x)$ \\
($\texttt{M}_5$) \; $\forall x,y,z(x<y\rightarrow x\cdot z<y\cdot z)$ \\
($\texttt{M}_6$) \; $\exists y (y\neq {\bf 1})$ \\
($\texttt{M}_7$) \; $\forall x\exists y(x=y^n)$  \qquad \qquad \qquad \qquad  $n\geqslant 2$
\end{tabular}

 \end{proposition}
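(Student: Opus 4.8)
The plan is to transport the whole content of Theorem~\ref{thm-oa} along the order isomorphism $\log\colon\mathbb{R}^+\to\mathbb{R}$ (with inverse $\exp$), which turns the multiplicative structure into the additive one. Since $\log$ is strictly increasing it preserves $<$, and the functional equation $\log(x\cdot y)=\log(x)+\log(y)$ shows that $\log$ carries $\times$ to $+$, the identity $\mathbf{1}$ to $\mathbf{0}$, and the reciprocal $x^{-1}$ to the negation $-x$; in particular $\log$ is an isomorphism between $\langle\mathbb{R}^+;<,\times,\square^{-1},\mathbf{1}\rangle$ and $\langle\mathbb{R};<,+,-,\mathbf{0}\rangle$.

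Next I would check that the axioms $\texttt{M}_1,\dots,\texttt{M}_7$ are exactly the images of $\texttt{A}_1,\dots,\texttt{A}_7$ under this translation: each multiplicative group axiom $\texttt{M}_i$ is the $\exp$/$\log$-transform of the corresponding additive axiom $\texttt{A}_i$, the order axioms $\texttt{O}_1,\texttt{O}_2,\texttt{O}_3$ are preserved verbatim, non-triviality $\texttt{M}_6$ corresponds to $\texttt{A}_6$, and the $n$-th root axiom $\texttt{M}_7$ (namely $\forall x\exists y\,[x=y^n]$) is precisely the image of the divisibility axiom $\texttt{A}_7$ (namely $\forall x\exists y\,[x=n\centerdot y]$), because $\log(y^n)=n\centerdot\log(y)$. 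Thus the presented theory is exactly the theory of non-trivial ordered divisible abelian groups, merely written multiplicatively.

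Finally, since isomorphic structures are elementarily equivalent and an isomorphism transports a quantifier-elimination procedure term by term, both the completeness of the axiomatization and the admissibility of quantifier elimination for $\langle\mathbb{R}^+;<,\times,\square^{-1},\mathbf{1}\rangle$ follow immediately from the corresponding assertions for $\langle\mathbb{R};<,+,-,\mathbf{0}\rangle$ established in Theorem~\ref{thm-oa}; decidability then follows as well. I do not anticipate a genuine obstacle here, since the result is an immediate corollary; the only point demanding a line of care is that every element of $\mathbb{R}^+$ really does possess a multiplicative inverse inside $\mathbb{R}^+$ (so that $\square^{-1}$ is a total operation on the intended domain, and the convention $0^{-1}=0$ never intervenes, as $0\notin\mathbb{R}^+$).
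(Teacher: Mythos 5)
Your proposal is correct and follows exactly the route the paper itself takes: the paper introduces this proposition by observing that $x\mapsto\log(x)$ is an isomorphism between $\langle\mathbb{R}^+;<,\times\rangle$ and $\langle\mathbb{R};<,+\rangle$, and it declares the completeness, quantifier-elimination and decidability claims to be an immediate corollary of Theorem~\ref{thm-oa} --- which is precisely your argument, including the observation that $\log$ carries $\texttt{M}_1$--$\texttt{M}_7$ to $\texttt{A}_1$--$\texttt{A}_7$. The one divergence is that the paper's proof environment is devoted entirely to a point you do not address: that the presented axiomatization is essentially infinite, i.e., no finite subset of the listed axioms suffices. The paper shows this by checking that, for sufficiently large $N$, the set $\{2^{m\cdot(N!)^{-k}}\mid m\in\mathbb{Z},k\in\mathbb{N}\}$ satisfies $\texttt{O}_1$--$\texttt{O}_3$, $\texttt{M}_1$--$\texttt{M}_6$ and the instances of $\texttt{M}_7$ with $n\leqslant N$, yet violates the instance of $\texttt{M}_7$ in which $n$ is a prime larger than $N!$ --- the multiplicative transform of Remark~\ref{rem-infq}. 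If the word ``infinite'' in the statement is read as claiming non-finite-axiomatizability, your proposal leaves that claim unproved; it does not follow from elementary equivalence alone, though you could obtain it in your framework by transporting the counterexample structure of Remark~\ref{rem-infq} along $\exp$, which is exactly what the paper's displayed set is.
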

\begin{proof}
For the infinite axiomatizability it suffices to note that for a sufficiently large $N$ the set $\{2^{{m}\cdot(N!)^{-k}}\mid m\in\mathbb{Z},k\in\mathbb{N}\}$ of positive real numbers (cf. Remark~\ref{rem-infq}) satisfies all the axioms ($\texttt{O}_1$, $\texttt{O}_2$, $\texttt{O}_3$, $\texttt{M}_1$,
 $\texttt{M}_2$, $\texttt{M}_3$, $\texttt{M}_4$, $\texttt{M}_5$,
 $\texttt{M}_6$) and  finitely many instances of the axiom $\texttt{M}_7$ (for $n\leqslant N$) but not all the instances of $\texttt{M}_7$ (for example when $n=p$ is a prime larger than $N!$).
\qed\end{proof}

\begin{theorem}[Axiomatizablity of $\langle\mathbb{R};<,\times\rangle$]\label{thm-or}
The following infinite theory
completely axiomatizes the order and multiplicative theory of the   real  numbers and, moreover, the structure $\langle\mathbb{R};<,\times,\square^{-1},{\bf -1},{\bf 0},{\bf 1}\rangle$  admits quantifier elimination, and so its theory is decidable.

\begin{tabular}{l}
($\texttt{O}_1$) \; $\forall x,y(x<y\rightarrow y\not<x)$  \\ ($\texttt{O}_2$) \; $\forall x,y,z (x<y<z\rightarrow x<z)$ \\
($\texttt{O}_3$) \; $\forall x,y (x<y \vee x=y \vee y<x)$ \\
($\texttt{M}_1$) \; $\forall x,y,z\,(x\cdot(y\cdot z)=(x\cdot y)\cdot z)$ \\
($\texttt{M}_2^\circ$) \; $\forall x (x\cdot\mathbf{1}=x  \;\,  \wedge \;\, x\cdot{\bf 0}={\bf 0}={\bf 0}^{-1})$ \\
($\texttt{M}_3^\circ$) \; $\forall x (x\neq {\bf 0}\rightarrow x\cdot  x^{-1}=\mathbf{1})$ \\
($\texttt{M}_4$) \; $\forall x,y (x\cdot y=y\cdot x)$ \\
($\texttt{M}_5^\circ$) \; $\forall x,y,z(x<y\wedge {\bf 0}<z \rightarrow x\cdot z<y\cdot z)$ \\
($\texttt{M}_5^\bullet$) \; $\forall x,y,z(x<y\wedge z<{\bf 0} \rightarrow y\cdot z<x\cdot z)$  \\
($\texttt{M}_6^\circ$) \; $\exists y ({\bf -1}<{\bf 0}<{\bf 1}<y)$
\\
($\texttt{M}_{7}^\circ$) \;  $\forall x\exists y (x=y^{2n+1})$
\\
($\texttt{M}_{8}$) \;  $\forall
x (x^{2n}={\bf 1}\longleftrightarrow x={\bf 1}\vee x={\bf -1})$ \\
($\texttt{M}_{9}$) \; $\forall x\,({\bf 0}<x\longleftrightarrow \exists y [y\neq{\bf 0}\wedge x=y^2])$
\end{tabular}

\end{theorem}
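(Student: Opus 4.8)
The plan is first to check soundness, and then to establish quantifier elimination through the Main Lemma (Remark~\ref{mainlem}), reducing everything to the already-settled positive multiplicative group by a case analysis on signs. Verifying that $\mathbb{R}$ satisfies all the listed axioms is routine, so I concentrate on the model-theoretic argument. Working inside the theory, I would first derive the structural backbone: by $\texttt{M}_1$--$\texttt{M}_4$ together with $\texttt{M}_3^\circ$ the nonzero elements form an abelian group, and by $\texttt{M}_5^\circ$, $\texttt{M}_9$ and $\texttt{M}_6^\circ$ this group splits as $\{{\bf 1},{\bf -1}\}$ times the positive cone $P=\{x\mid {\bf 0}<x\}$. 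The crucial point is that $P$ is \emph{divisible}: $\texttt{M}_7^\circ$ supplies roots of odd order, $\texttt{M}_9$ supplies square roots of positive elements, and iterating these yields an $n$-th root for every $n\geqslant 1$. Finally $\texttt{M}_8$ pins the only roots of unity to be ${\bf 1}$ and ${\bf -1}$, so the sign part is a clean copy of $\mathbb{Z}/2\mathbb{Z}$. Hence $\langle P;<,\times\rangle$ is a non-trivial ordered divisible abelian group, and by Proposition~\ref{thm-or+} (equivalently, Theorem~\ref{thm-oa} transported through $\log$) it admits quantifier elimination.

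Next, I would normalize the terms. Using commutativity and associativity ($\texttt{M}_1,\texttt{M}_4$) and the inverse, every term is provably equal to $x^n\cdot t$ for some $x$-free term $t$ and some $n\in\mathbb{Z}$, where negative powers are read through $x^{-1}$ and the convention ${\bf 0}^{-1}={\bf 0}$ of $\texttt{M}_2^\circ$ keeps this well defined. Consequently, on each sign-region for $x$, every atomic formula containing $x$ is equivalent to one of the forms $x^n\cdot s \;\Box\; t$ with $\Box\in\{<,=\}$ and $s,t$ being $x$-free. By Remark~\ref{mainlem} it then suffices to eliminate the quantifier from $\exists x\,\bigwedge_i\alpha_i$, where each $\alpha_i$ is such an atomic formula, the negated atomics being absorbed by the order as in that remark.

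I would then case-split the existential according to the sign of $x$, writing $\exists x\,\varphi(x)$ as the disjunction of $\exists x({\bf 0}<x\wedge\varphi)$, of $\exists x(x<{\bf 0}\wedge\varphi)$, and of the substitution instance $\varphi[{\bf 0}/x]$. The zero case collapses immediately via $\texttt{M}_2^\circ$, since every monomial in $x$ becomes ${\bf 0}$. The negative case reduces to the positive one through $x={\bf -1}\cdot w$ with ${\bf 0}<w$ (using $\texttt{M}_5^\bullet$), after which the sign contributed by $({\bf -1})^n$ is governed solely by the parity of $n$. In the positive case I would additionally branch on the signs of the finitely many parameters occurring in the $s$'s and $t$'s; once these are fixed, each atomic $x^n\cdot s\;\Box\;t$ becomes a statement purely about the divisible ordered group $P$, and the existential over $x\in P$ is eliminated by the quantifier elimination already granted for $P$. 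Re-assembling the finitely many cases keeps the result quantifier-free, and decidability follows in the usual way.

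The step I expect to be the main obstacle is the bookkeeping forced by the two-valued sign (torsion) part interacting with the exponents: for even $n$ the monomial $x^n$ discards the sign of $x$, whereas for odd $n$ it retains it, and the parameters carry a priori unknown signs. This parity-driven case analysis is the multiplicative analogue of the congruence conditions $\{\equiv_n\}$ that were needed in the discrete additive case of Theorem~\ref{thm-oaz}, and the delicate point is to verify that after fixing all signs and reducing to $P$ the resulting existential genuinely has a quantifier-free equivalent, and that the convention ${\bf 0}^{-1}={\bf 0}$ does not spoil the normalization of terms to the form $x^n\cdot t$.
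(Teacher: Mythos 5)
Your proposal is correct and follows essentially the same route as the paper's own proof: a sign case-split (positive/zero/negative) on the quantified variable and on the parameters, reduction of the negative case via multiplication by ${\bf -1}$, and reduction of the positive case to Proposition~\ref{thm-or+} after checking inside the theory that the positive cone is a non-trivial ordered divisible abelian group, with divisibility obtained exactly as in the paper by writing $n=2^k(2m+1)$ and combining odd roots ($\texttt{M}_7^\circ$) with iterated square roots ($\texttt{M}_9$). The only presentational difference is that the paper carries out your ``parameter-sign branching'' as an explicit syntactic elimination of the constants ${\bf 0}$ and ${\bf -1}$ from atomic formulas (replacing degenerate atoms by $\top$ or $\bot$) before invoking Proposition~\ref{thm-or+}.
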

\begin{proof}
We have $(x<{\bf 0}) \leftrightarrow ({\bf 0}<-x)$ by $\texttt{M}_{5}^\bullet$, $\texttt{M}_{2}^\circ$, $\texttt{M}_{6}^\circ$ and $\texttt{M}_{8}$, where $-x=({\bf -1})\cdot x$.
%So, by $\texttt{O}_{3}$, for any variable $z$ we have   three cases:    ${\bf 0}<z$, $z={\bf 0}$ or ${\bf 0}<-z$.
Whence, for any quantifier-free formula $\eta$ we have  $$\exists x \eta(x)\equiv \exists x\!>\!{\bf 0}\eta(x)\vee \eta({\bf 0}) \vee \exists y\!>\!{\bf 0}\eta(-y).$$ Also, if $z$ is another variable in $\eta$ then $\eta(x,z)$ is equivalent with $[{\bf 0}<z \wedge \eta(x,z)]\vee \eta(x,{\bf 0}) \vee [{\bf 0}<-z\wedge\eta(x,z)]$. For the last disjunct, if we let $z'=-z$ then ${\bf 0}<-z\wedge\eta(x,z)$ will be ${\bf 0}<z'\wedge\eta(x,-z')$.
Thus, by introducing the constants ${\bf 0}$ and ${\bf -1}$ (and renaming the variables if necessary), we can assume that all the variables of a quantifier-free formula  are positive.
 Now, the process of eliminating the quantifier of the formula $\exists x\eta(x)$, where $\eta$ is the conjunction of some atomic formulas (cf. Remark~\ref{mainlem}) goes as follows:  we first eliminate the constants ${\bf 0}$ and ${\bf -1}$ and then reduce the desired conclusion to Proposition~\ref{thm-or+}.
 For the first part, we simplify terms so that each term is either positive (all the variables are positive) or equals to ${\bf 0}$ or is the negation of a positive term (is $-t$ for some positive term $t$). Then by replacing ${\bf 0}={\bf 0}$ with $\top$ and ${\bf 0}<{\bf 0}$ with $\bot$ we can assume that ${\bf 0}$ appears at most once in any atomic formula; also ${\bf -1}$ appears at most once since $-t=-s$ is equivalent with $t=s$ and $-t<-s$ with $s<t$.
 Now, we can eliminate the constant ${\bf -1}$ by replacing the atomic formulas $-t=s$, $t=-s$ and $t<-s$ by $\bot$ and $-t<s$ by $\top$ for positive or zero terms $t,s$ (note that   ${\bf -0}={\bf 0}$ by $\texttt{M}_{2}^\circ$).  Also the constant ${\bf 0}$ can be eliminated by replacing ${\bf 0}<t$ with $\top$ and  $t<{\bf 0}$ and $t={\bf 0}$ (also ${\bf 0}=t$) with $\bot$ for positive terms $t$.
 Thus, we get a formula whose all variables are positive, and so we are in the realm of $\mathbb{R}^+$. Finally, for the second part we have the equivalence of thus resulted formula with a quantifier-free formula by Proposition~\ref{thm-or+} provided that the relativized form of the axioms $\texttt{O}_{1}$, $\texttt{O}_{2}$, $\texttt{O}_{3}$, $\texttt{M}_{1}$, $\texttt{M}_{2}$, $\texttt{M}_{3}$, $\texttt{M}_{4}$, $\texttt{M}_{5}$, $\texttt{M}_{6}$ and $\texttt{M}_{7}$ to $\mathbb{R}^+$ can be proved from the axioms $\texttt{O}_{1}$, $\texttt{O}_{2}$, $\texttt{O}_{3}$, $\texttt{M}_{1}$, $\texttt{M}_{2}^\circ$, $\texttt{M}_{3}^\circ$, $\texttt{M}_{4}$, $\texttt{M}_{5}^\circ$, $\texttt{M}_{5}^\bullet$, $\texttt{M}_{6}^\circ$, $\texttt{M}_{7}^\circ$, $\texttt{M}_{8}$,  and $\texttt{M}_{9}$. We need to consider   $\texttt{M}_{6}$ and $\texttt{M}_{7}$ only, when relativized to $\mathbb{R}^+$, i.e., $\exists y({\bf 0}<y \wedge y\neq {\bf 1})$ and $\forall x \exists y [{\bf 0}<x \rightarrow {\bf 0}<y \wedge x=y^n]$. The relativization of $\texttt{M}_{6}$ immediately follows from  $\texttt{M}_{6}^\circ$. For the relativization of $\texttt{M}_{7}$ take any $a>{\bf 0}$, and any $n\in\mathbb{N}$. Write $n=2^k(2m+1)$; by $\texttt{M}_{7}^\circ$ there exists some $c$ such that $c^{2m+1}=a$, and by $\texttt{M}_{5}^\circ$ and $\texttt{M}_{5}^\bullet$ we should have $c>{\bf 0}$. Now, by using $\texttt{M}_{9}$  for $k$ times there must exist some $b$ such that $b^{2^k}=c$ and we can have $b>{\bf 0}$ (since otherwise we can take $-b$ instead of $b$). Now, we have $b^{2^k(2m+1)}=c^{2m+1}=a$ and so $a=b^n$.
 \qed\end{proof}

That no finite set of axioms can completely axiomatize the theory of $\langle\mathbb{R};<,\times\rangle$ can be seen from the fact that the set $\{0\}\cup\{-2^{{m}\cdot(N!)^{-k}},2^{{m}\cdot(N!)^{-k}}\mid m\in\mathbb{Z},k\in\mathbb{N}\}$ of real numbers, for some $N>2$, satisfies all the axioms of Theorem~\ref{thm-or} except $\texttt{M}_{7}^\circ$; however it satisfies a finite number of its instances (when $2n+1\leqslant N$) but not all the instances (e.g. when $2n+1$ is a prime greater than $N!$) of $\texttt{M}_{7}^\circ$ (cf. the proof of Proposition~\ref{thm-or+} and Remark~\ref{rem-infq}).

\subsection{\bf Rational Numbers with Order and Multiplication}\label{subsec-q}
The technique of the proof of Theorem~\ref{thm-or} enables us to consider first the multiplicative and order structure of the positive rational numbers $\langle\mathbb{Q}^+;<,\times\rangle$. One can easily see that the formula $\exists x(y=x^n)$ (for $n>1$) is not equivalent with any quantifier-free formula in $\langle\mathbb{Q}^+;<,\times\rangle$; so let us introduce the following notation.
\begin{definition}[$\Re$]\label{def-re}
Let  $\Re_n(y)$ be the formula $\exists x(y=x^n)$, stating that ``$y$ is the $n$th power of a number'' (for $n>1$).
\qedef\end{definition}
Now we can introduce our candidate axiomatization for the theory of the structure $\langle\mathbb{Q}^+;<,\times\rangle$.
\begin{definition}[${\sf TQ}$]\label{def-tq}
Let ${\sf TQ}$ be the theory axiomatized by the axioms $\texttt{O}_{1}$, $\texttt{O}_{2}$, $\texttt{O}_{3}$, $\texttt{M}_{1}$, $\texttt{M}_{2}$, $\texttt{M}_{3}$, $\texttt{M}_{4}$, $\texttt{M}_{5}$  and $\texttt{M}_{6}$ of Proposition~\ref{thm-or+}
plus the following two axiom schemes:

\begin{tabular}{l}
  $(\texttt{M}_{10})$ \;  $\forall x,z\exists y (x<z\rightarrow x<y^n<z)$, and  \\
 $(\texttt{M}_{11})$ \;  $\forall \{x_j\}_{j<q} \exists y \forall z  \bigwedge\hspace{-1.5ex}\bigwedge_{m_j\nmid n (j<q)} (y^n\cdot x_j \neq z^{m_j})$;
 \end{tabular}

\quad  for each $n\geqslant 1$ (and $m_j>1$).
\qedef
\end{definition}
Some explanations on the new axioms  $\texttt{M}_{10}$ and $\texttt{M}_{11}$ are in order. The axiom $\texttt{M}_{10}$, interpreted in $\mathbb{Q}^+$,  states that   $\mathbb{Q}^+$ is dense not only in itself but also in the radicals of its elements (or more generally in $\mathbb{R}^+$: for any $x,z\in\mathbb{Q}^+$ there exists some $y\in\mathbb{Q}^+$ that satisfies $\sqrt[n]{x}<y<\sqrt[n]{z}$). The axiom $\texttt{M}_{11}$, interpreted in $\mathbb{Q}^+$ again, is actually equivalent with the fact that for any sequences $x_1,\cdots,x_q\in\mathbb{Q}^+$ and  $m_1,\cdots,m_q\in\mathbb{N}^+$ none of which divides $n$ (in symbols $m_j\nmid n$), there exists some $y\in\mathbb{Q}^+$ such that $\bigwedge\hspace{-1.5ex}\bigwedge_{j}\neg\Re_{m_j}(y^n\cdot x_j)$. This axiom is not true in $\mathbb{R}^+$ (while $\texttt{M}_{10}$ is true in it) and to see that why $\texttt{M}_{11}$ is true in $\mathbb{Q}^+$ it suffices to note that for given $x_1,\cdots,x_q$ one can take $y$ to be a prime number which does not appear in the unique factorization (of the numerators and denominators of the reduced forms) of any of $x_j$'s. In this case $y^n\cdot x_j$ can be an $m_j$'s power (of a rational number) only when $m_j$ divides $n$. The condition $m_j\nmid n$ is necessary, since otherwise (if $m_j\mid n$ and) if $x_j$ happens to satisfy $\Re_{m_j}(x_j)$ then no   $y$ can satisfy the relation  $\neg\Re_{m_j}(y^n\cdot x_j)$.

We now show that ${\sf TQ}$ completely axiomatizes the theory of  the structure
$$\langle\mathbb{Q}^+;<,\times,\square^{-1},{\bf 1},\{\Re_n\}_{n>1}\rangle$$ and moreover this structure  admits quantifier elimination, thus the theory of the structure $\langle\mathbb{Q}^+;<,\times\rangle$ is decidable.
For that, we will need the following lemmas.
\begin{lemma}\label{lem-1}
For any $x\in\mathbb{Q}^+$ and any natural   $n_1,n_2>1$,
$$\Re_{n_1}(x)\wedge\Re_{n_2}(x)\iff \Re_n(x)$$
 where $n$ is the least common multiplier of $n_1$ and $n_2$.
\end{lemma}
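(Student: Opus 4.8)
The plan is to prove the two implications separately, working with the definition $\Re_k(x)\iff\exists z(x=z^k)$ from Definition~\ref{def-re}. The backward direction ($\Re_n(x)\Rightarrow\Re_{n_1}(x)\wedge\Re_{n_2}(x)$) is the routine one: if $n=\mathrm{lcm}(n_1,n_2)$ then $n_1\mid n$ and $n_2\mid n$, so writing $n=n_1\cdot a=n_2\cdot b$ and $x=w^n$, we immediately get $x=(w^a)^{n_1}=(w^b)^{n_2}$, witnessing both $\Re_{n_1}(x)$ and $\Re_{n_2}(x)$. This uses only associativity of $\times$ ($\texttt{M}_1$) and the definition of $\Re$.

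The forward direction is the substantive part. Here I would argue arithmetically in $\mathbb{Q}^+$ via unique factorization. Suppose $x=u^{n_1}=v^{n_2}$ for some $u,v\in\mathbb{Q}^+$. Writing $x$ in reduced form and looking at the exponent of each prime $p$ in the prime factorization of $x$ (a positive or negative integer $e_p$), the hypothesis $\Re_{n_1}(x)$ says that $n_1\mid e_p$ for every prime $p$, and $\Re_{n_2}(x)$ says that $n_2\mid e_p$ for every prime $p$. Since each $e_p$ is then divisible by both $n_1$ and $n_2$, it is divisible by their least common multiplier $n=\mathrm{lcm}(n_1,n_2)$. Defining $y$ to be the rational number whose $p$-exponent is $e_p/n$ for each $p$, we obtain $y^n=x$, i.e.\ $\Re_n(x)$ holds.

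The main obstacle, such as it is, lies in justifying the divisibility step cleanly: the key number-theoretic fact is that an integer divisible by both $n_1$ and $n_2$ is divisible by their least common multiplier, which is exactly the defining property of $\mathrm{lcm}$. One should also take care that the argument is carried out for \emph{every} prime simultaneously and that the constructed $y$ is a genuine element of $\mathbb{Q}^+$ (only finitely many $e_p$ are nonzero, so $y$ is a well-defined rational). I expect the whole proof to be short, since it reduces entirely to the observation $\big(n_1\mid e \wedge n_2\mid e\big)\Leftrightarrow \mathrm{lcm}(n_1,n_2)\mid e$ applied coordinatewise to the exponent vector of $x$ in its unique factorization.
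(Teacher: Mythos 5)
Your proof is correct, but it takes a genuinely different route from the paper's. The paper proves the substantive (forward) direction purely algebraically: since $n=\mathrm{lcm}(n_1,n_2)$, the integers $n/n_1$ and $n/n_2$ are coprime, so B\'{e}zout's Identity gives $c_1,c_2\in\mathbb{Z}$ with $c_1(n/n_1)+c_2(n/n_2)=1$, and then from $x=y^{n_1}=z^{n_2}$ one computes directly
$x=x^{c_1n/n_1}\cdot x^{c_2n/n_2}=y^{c_1n}\cdot z^{c_2n}=(y^{c_1}z^{c_2})^n$,
exhibiting an explicit $n$th root. You instead pass to the prime factorization of $x$ in $\mathbb{Q}^+$, observe that $\Re_{n_i}(x)$ forces $n_i\mid e_p$ for every prime exponent $e_p$, and use the defining property of the lcm coordinatewise. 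Both arguments are valid for the lemma as stated (and you handle the easy backward direction explicitly, which the paper omits). The difference that matters is this: the paper's computation uses only the abelian-group axioms of $\langle\mathbb{Q}^+;\times,\square^{-1},\mathbf{1}\rangle$, which is exactly why the paper can remark immediately afterwards that Lemma~\ref{lem-1} is \emph{provable in} ${\sf TQ}$ --- a fact needed later, since the quantifier-elimination argument for Theorem~\ref{omq+} requires the equivalences to be consequences of the axioms, not merely true in the intended model. Your unique-factorization argument is tied to the specific arithmetic of $\mathbb{Q}^+$ (a structure in which ${\sf TQ}$ happens to hold), so it establishes truth in $\mathbb{Q}^+$ but would not by itself justify that remark; on the other hand, it makes transparent \emph{why} the lcm is the right modulus, something the B\'{e}zout computation leaves implicit.
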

\begin{proof}
Suppose   $x=y^{n_1}=z^{n_2}$ holds. By B\'{e}zout's Identity there are   $c_1,c_2\in\mathbb{Z}$ such that $c_1n/n_1+c_2n/n_2=1$; therefore,   $x=x^{c_1n/n_1}\cdot x^{c_2n/n_2}=y^{c_1n}\cdot z^{c_2n}=(y^{c_1}z^{c_2})^n$.
\qed\end{proof}

\begin{lemma}%[The First Quantifier Elimination]
\label{lem-1qe}
For natural numbers $\{n_i\}_{i<p}$ with $n_i>1$ and positive rational numbers $\{t_i\}_{i<p}$ and $x$,
$$\bigwedge\hspace{-2.25ex}\bigwedge_{i<p}\Re_{n_i}(x\cdot t_i) \iff \Re_n(x\cdot\beta)\wedge
\bigwedge\hspace{-2.15ex}\bigwedge_{i\neq j}\Re_{d_{i,j}}(t_i\cdot t_j^{-1})$$
 where $n$ is the least common multiplier of $n_i$'s, $d_{i,j}$ is the greatest common divisor of $n_i$ and $n_j$ \textup{(}for each $i\neq j$\textup{)} and $\beta=\prod_{i<p}t_i^{c_i(n/n_i)}$ in which $c_i$'s satisfy $\sum_{i<p}c_i(n/n_i)=1$.
\end{lemma}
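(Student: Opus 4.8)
The plan is to pass from $\mathbb{Q}^+$ to its description as the free abelian group on the primes via unique factorization. For a prime $q$, write $v_q(r)$ for the exponent of $q$ in the reduced form of $r\in\mathbb{Q}^+$; then $r\mapsto(v_q(r))_q$ is an isomorphism of $\langle\mathbb{Q}^+;\times\rangle$ onto $\bigoplus_q\mathbb{Z}$, under which multiplication becomes coordinatewise addition and $\Re_m(r)$ becomes the assertion that $m\mid v_q(r)$ for every prime $q$. Hence each side of the claimed equivalence is a conjunction of conditions indexed by the primes, and it suffices to verify the equivalence one prime at a time. Fixing $q$ and abbreviating $a=v_q(x)$ and $b_i=v_q(t_i)$, I must show that the system $a\equiv -b_i\pmod{n_i}$ (for $i<p$) is equivalent to the single congruence $n\mid\bigl(a+\sum_{i<p}c_i(n/n_i)b_i\bigr)$ together with the compatibilities $d_{i,j}\mid(b_i-b_j)$ (for $i\neq j$). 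First note that $\beta$ is well defined: coordinatewise one has $\min_{i<p}\bigl(v_q(n)-v_q(n_i)\bigr)=0$, so $\gcd_{i<p}(n/n_i)=1$, and B\'ezout's identity (for several integers) furnishes integers $c_i$ with $\sum_{i<p}c_i(n/n_i)=1$; then $v_q(\beta)=\sum_{i<p}c_i(n/n_i)b_i$.

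For the forward direction, assume $n_i\mid(a+b_i)$ for all $i$. The compatibilities are immediate: reducing $a\equiv-b_i$ and $a\equiv-b_j$ modulo the common divisor $d_{i,j}$ of $n_i$ and $n_j$ gives $b_i\equiv b_j\pmod{d_{i,j}}$, i.e.\ $\Re_{d_{i,j}}(t_i\cdot t_j^{-1})$. For the single congruence I use the B\'ezout relation to telescope: since $\sum_{i<p}c_i(n/n_i)=1$,
\[
a+\sum_{i<p}c_i(n/n_i)b_i=\sum_{i<p}c_i(n/n_i)(a+b_i),
\]
and each summand equals $c_i\,n\,(a+b_i)/n_i$, which is divisible by $n$ because $n_i\mid(a+b_i)$; hence $n\mid\bigl(a+v_q(\beta)\bigr)$, which is $\Re_n(x\cdot\beta)$.

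The backward direction is the crux and the main obstacle. Assume $n\mid\bigl(a+\sum_{i<p}c_i(n/n_i)b_i\bigr)$ and $d_{i,j}\mid(b_i-b_j)$ for all $i\neq j$, and fix an index $k<p$; I will show $n_k\mid(a+b_k)$. Reducing the first hypothesis modulo $n_k$ (which divides $n$), it suffices to prove $\sum_{i<p}c_i(n/n_i)b_i\equiv b_k\pmod{n_k}$. The key arithmetic fact is that $n_k\mid d_{k,i}\,(n/n_i)$ for each $i$: indeed $n/n_i$ is a multiple of $\operatorname{lcm}(n_k,n_i)/n_i=n_k/d_{k,i}$, so $d_{k,i}\,(n/n_i)$ is a multiple of $(n_k/d_{k,i})\,d_{k,i}=n_k$. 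Writing $b_i=b_k+d_{k,i}f_i$ by the compatibilities, the contribution of each index $i$ therefore satisfies $c_i(n/n_i)b_i\equiv c_i(n/n_i)b_k\pmod{n_k}$; summing and using $\sum_{i<p}c_i(n/n_i)=1$ yields $\sum_{i<p}c_i(n/n_i)b_i\equiv b_k\pmod{n_k}$, as required. Since this holds for every prime $q$, each $\Re_{n_k}(x\cdot t_k)$ holds, completing the equivalence. This last computation---the vanishing of the cross terms modulo $n_k$---is the multiplicative, simultaneous generalization of the two-modulus Chinese-Remainder combination already used in the proof of Theorem~\ref{thm-oaz} and underlying Proposition~\ref{crt}; once it is in place, the rest is routine.
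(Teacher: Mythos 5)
Your proof is correct, but it takes a genuinely different route from the paper's. You pass through unique factorization: the isomorphism $r\mapsto(v_q(r))_q$ of $\langle\mathbb{Q}^+;\times\rangle$ onto $\bigoplus_q\mathbb{Z}$ turns the lemma into a statement about simultaneous congruences --- $n_i\mid(a+b_i)$ for all $i<p$ iff $n\mid\bigl(a+\sum_{i}c_i(n/n_i)b_i\bigr)$ and $d_{i,j}\mid(b_i-b_j)$ --- which you then settle prime by prime using B\'ezout and the divisibility $n_k\mid d_{k,i}(n/n_i)$; this is an additive, Chinese-Remainder-style argument in the spirit of Proposition~\ref{crt}. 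The paper instead argues entirely inside the multiplicative group: from the compatibilities $t_k\cdot t_i^{-1}=w_{k,i}^{d_{k,i}}$ it derives $\Re_{n_k}(t_k\cdot\beta^{-1})$ via the telescoping identity $t_k\cdot\beta^{-1}=\prod_{i\neq k}(t_k\cdot t_i^{-1})^{c_i(n/n_i)}$ together with the same divisibility fact (in the guise $d_{k,i}(n/n_i)=n_k(n/m_{k,i})$ where $m_{k,i}=\operatorname{lcm}(n_k,n_i)$), and both directions of the equivalence then follow by multiplying and dividing powers, with Lemma~\ref{lem-1} absorbing the passage to the lcm. The two arguments hinge on the same arithmetic facts (the B\'ezout relation $\sum_{i}c_i(n/n_i)=1$ and $n_k\mid d_{k,i}(n/n_i)$), and yours is arguably more transparent about why the statement is a CRT in disguise. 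What the paper's version buys is that it is a purely equational computation, valid in any abelian group satisfying the axioms; this is exactly what licenses the remark, made right after the proof, that Lemma~\ref{lem-1qe} is provable in ${\sf TQ}$ --- a point the completeness claim of Theorem~\ref{omq+} needs, since the quantifier elimination must be carried out provably in ${\sf TQ}$ rather than merely verified in the standard model. Your argument, resting on unique factorization in $\mathbb{Q}^+$, establishes the lemma as stated (a fact about the standard structure) but would not by itself yield that provability.
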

\begin{proof}
For $t_i$'s, $n_i$'s,  $c_i$'s,  $d_{i,j}$'s and $n$  as given above, we show that   $\Re_{n_k}(t_k\cdot \beta^{-1})$ holds for each  fixed $k<p$ when $\bigwedge\hspace{-1.5ex}\bigwedge_{i\neq j}\Re_{d_{i,j}}(t_i\cdot t_j^{-1})$ holds.   Let $m_{k,i}$ be  the least common multiplier of $n_k$ and $n_i$ (which is a divisor of $n$ then). Let us note that $d_{k,i}/n_i=n_k/m_{k,i}$.
Since $\Re_{d_{k,i}}(t_k\cdot t_i^{-1})$ there should exists some $w_{k,i}$'s (for $i\neq k$) such that $t_k\cdot t_i^{-1}=w_{k,i}^{d_{k,i}}$.
Now, the relation  $\Re_{n_k}(t_k\cdot \beta^{-1})$ follows from the following identities:
 $t_k\cdot \beta^{-1}=t_k^{\sum_{i}c_i(n/n_i)}\cdot \prod_{i}t_i^{-c_i(n/n_i)}=\prod_{i\neq k}(t_k\cdot t_i^{-1})^{c_i(n/n_i)}=\prod_{i\neq k} (w_{k,i}^{d_{k,i}})^{c_i(n/n_i)} = \prod_{i\neq k} w_{k,i}^{c_i\cdot n_k(n/m_{k,i})} = (\prod_{i\neq k} w_{k,i}^{c_i(n/m_{k,i})})^{n_k}$.
\begin{itemize}\itemindent=2.5ex
\item[($\Rightarrow$):]   The relations $\Re_{n_i}(x\cdot t_i)$ and   $\Re_{n_j}(x\cdot t_j)$ immediately imply that  $\Re_{d_{i,j}}(x\cdot t_i)$ and $\Re_{d_{i,j}}(x\cdot t_j)$ and so $\Re_{d_{i,j}}(t_i\cdot t_j^{-1})$. For showing $\Re_n(x\cdot\beta)$ it suffices, by Lemma~\ref{lem-1}, to show that $\Re_{n_i}(x\cdot \beta)$ holds for each $i<p$.   This immediately follows from   the relation  $\Re_{n_i}(t_i\cdot \beta^{-1})$ which was proved above, and the assumption $\Re_{n_i}(x\cdot t_i)$.
\item[($\Leftarrow$):]
From  the first part of the proof we have   $\Re_{n_k}(t_k\cdot \beta^{-1})$ for each $k<p$; now by $\Re_n(x\cdot \beta)$ we have $\Re_{n_k}(x\cdot \beta)$ and so $\Re_{n_k}(x\cdot t_k)$  for each $k<p$.
\qed
\end{itemize}
\end{proof}
Let us note that Lemmas~\ref{lem-1} and~\ref{lem-1qe} are provable in   ${\sf TQ}$. The idea of the proof of Lemma~\ref{lem-1qe} is taken from \cite{ore}.
\begin{lemma}\label{lem-aqe}
The following sentences are provable in ${\sf TQ}$ for any $n>1$\textup{:}

$\forall u\exists y [\Re_n(y\cdot u)]$,

$\forall x,u\exists y [x<y\wedge\Re_n(y\cdot u)]$,

$\forall z,u\exists y [y<z\wedge\Re_n(y\cdot u)]$ and

$\forall x,z,u\exists y [x<z\rightarrow x<y<z\wedge\Re_n(y\cdot u)]$.
\end{lemma}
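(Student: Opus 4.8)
The plan is to extract all four statements from a single construction driven by the density axiom $\texttt{M}_{10}$, after a harmless rescaling that converts ``$y\cdot u$ is an $n$th power in an interval'' into ``an $n$th power lies in a rescaled interval.'' Before that, I would record two facts provable in ${\sf TQ}$ from $\texttt{M}_5$ and $\texttt{M}_6$ alone, exactly as in the proof of Theorem~\ref{thm-oa}: the order has no endpoints, i.e.\ $\texttt{O}_5$ and $\texttt{O}_6$ hold. Indeed, by $\texttt{M}_6$ and $\texttt{O}_3$ there is some $b\neq{\bf 1}$, and replacing $b$ by $b^{-1}$ if necessary (from $b<{\bf 1}$ we get ${\bf 1}<b^{-1}$ by $\texttt{M}_5$) we may assume ${\bf 1}<b$; then $\texttt{M}_5$ yields $x<x\cdot b$ and $x\cdot b^{-1}<x$ for every $x$, which are $\texttt{O}_5$ and $\texttt{O}_6$.

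\paragraph{}The engine is the fourth statement. Given $x,z,u$ with $x<z$, by $\texttt{M}_5$ we have $x\cdot u<z\cdot u$, so $\texttt{M}_{10}$ applied to the pair $x\cdot u<z\cdot u$ supplies some $v$ with $x\cdot u<v^n<z\cdot u$. Put $y=v^n\cdot u^{-1}$. Then $y\cdot u=v^n$ witnesses $\Re_n(y\cdot u)$, while multiplying the inequalities $x\cdot u<v^n$ and $v^n<z\cdot u$ by $u^{-1}$ (again via $\texttt{M}_5$) gives $x<y<z$. When $x\not<z$ the implication in the fourth statement is vacuously true, so any $y$ (say $y={\bf 1}$) works; hence the outer existential quantifier is satisfied in all cases.

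\paragraph{}The remaining three statements are then immediate corollaries of the construction. For the two one-sided bounds, given $x$ (resp.\ $z$) and $u$, I would use $\texttt{O}_5$ to choose some $z>x$ (resp.\ $\texttt{O}_6$ to choose some $x<z$), feed the interval determined by $x<z$ to the construction above to obtain $y$ with $x<y<z\wedge\Re_n(y\cdot u)$, and then simply discard the bound that was not requested. For the unbounded statement $\forall u\exists y\,[\Re_n(y\cdot u)]$ one may take $y=u^{n-1}$ directly, since then $y\cdot u=u^n=(u)^n$ is manifestly an $n$th power (alternatively it follows from any of the bounded statements).

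\paragraph{}I expect no genuine obstacle here: the whole lemma rests on the single idea of the rescaling $y\mapsto v^n\cdot u^{-1}$, which is exactly matched to the form of $\texttt{M}_{10}$. The one point that must be handled first, rather than assumed, is the availability of $\texttt{O}_5$ and $\texttt{O}_6$ to supply the missing endpoint in the two one-sided statements; but this is the same short computation from $\texttt{M}_5$ and $\texttt{M}_6$ already used earlier in the paper, so it presents no real difficulty. Notably, $\texttt{M}_{11}$ plays no role in this lemma, as none of the four statements concerns non-$n$th-powers.
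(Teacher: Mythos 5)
Your proof is correct and takes essentially the same route as the paper: the core construction is identical, namely applying $\texttt{M}_{10}$ to the rescaled pair $x\cdot u<z\cdot u$ and setting $y=v^n\cdot u^{-1}$. The paper proves only the fourth sentence and leaves the rest implicit; your derivation of the one-sided and unbounded statements (via $\texttt{O}_5$, $\texttt{O}_6$ and $y=u^{n-1}$) just fills in those omitted routine details.
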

\begin{proof}
We show the last formula only. By $\texttt{M}_{10}$ (of Definition~\ref{def-tq}) there exists   $v$ such that $x\cdot u<v^n<z\cdot u$. Then for $y=v^n\cdot u^{-1}$ we will have $x<y<z$ and $\Re_n(y\cdot u)$.
\qed\end{proof}
\begin{lemma}\label{lem-bqe}
The following sentences are provable in ${\sf TQ}$ for any $m_1,\cdots,m_j,\cdots>1$\textup{:}

$\forall \{x_j\}_{j<q}\exists y [\bigwedge\hspace{-1.5ex}\bigwedge_{j<q}\neg\Re_{m_j}(y\cdot x_j)]$,

$\forall \{x_j\}_{j<q},u\exists y [u<y\wedge\bigwedge\hspace{-1.5ex}\bigwedge_{j<q}\neg\Re_{m_j}(y\cdot x_j)]$,

$\forall \{x_j\}_{j<q},v\exists y [y<v\wedge \bigwedge\hspace{-1.5ex}\bigwedge_{j<q}\neg\Re_{m_j}(y\cdot x_j)]$ and

$\forall \{x_j\}_{j<q},u,v\exists y [u<v\rightarrow u<y<v\wedge \bigwedge\hspace{-1.5ex}\bigwedge_{j<q}\neg\Re_{m_j}(y\cdot x_j)]$.
\end{lemma}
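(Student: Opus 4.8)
The plan is to derive all four statements by letting the two new schemes of Definition~\ref{def-tq} play complementary roles: $\texttt{M}_{11}$ will supply a witness that simultaneously fails every power relation, and $\texttt{M}_{10}$ will then be used to slide that witness into the prescribed region. The conceptual crux is a preservation principle, provable already from the group axioms $\texttt{M}_{1}$--$\texttt{M}_{4}$ of ${\sf TQ}$: if $\neg\Re_m(a)$ and $\Re_m(c)$ then $\neg\Re_m(a\cdot c)$. Indeed, were $a\cdot c=b^m$ together with $c=d^m$, then commutativity would give $a=(b\cdot d^{-1})^m$, contradicting $\neg\Re_m(a)$.

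The first sentence is immediate from the instance of $\texttt{M}_{11}$ at $n=1$: since every $m_j>1$ we have $m_j\nmid 1$ for all $j<q$, so that instance reads $\exists y\,\forall z\bigwedge_{j<q}(y\cdot x_j\neq z^{m_j})$, which is precisely $\exists y\bigwedge_{j<q}\neg\Re_{m_j}(y\cdot x_j)$. Fix such a witness $y_0$. What makes the three remaining statements go through is that $y_0$ may be rescaled freely by any $N$-th power without disturbing the non-power conditions, where $N$ is any common multiple of $m_1,\dots,m_q$. For every $t$ the element $t^N$ satisfies $\Re_{m_j}(t^N)$, witnessed by the legitimate term $t^{N/m_j}$ (here $N/m_j$ is a fixed natural number because $m_j\mid N$), so by the preservation principle $\neg\Re_{m_j}(y_0\cdot x_j)$ yields $\neg\Re_{m_j}\big((y_0\cdot t^N)\cdot x_j\big)$ for each $j<q$. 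Hence $y_0\cdot t^N$ is again a simultaneous non-power witness, for every choice of $t$, and it only remains to choose $t$ so that $y_0\cdot t^N$ lands in the target region.

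For the fourth (bounded) statement, assume $u<v$, so that $u\cdot y_0^{-1}<v\cdot y_0^{-1}$ by $\texttt{M}_{5}$; applying $\texttt{M}_{10}$ to this pair with exponent $N$ produces $t$ with $u\cdot y_0^{-1}<t^N<v\cdot y_0^{-1}$, whence $y=y_0\cdot t^N$ satisfies $u<y<v$ together with all the non-power conditions. The two half-line statements are handled the same way, using that ${\sf TQ}$ proves $\texttt{O}_{5}$ and $\texttt{O}_{6}$ (no endpoints, derivable from $\texttt{M}_{5}$ and the non-triviality $\texttt{M}_{6}$ exactly as in Theorem~\ref{thm-oa}): choose some $w$ above $u\cdot y_0^{-1}$ (respectively below $v\cdot y_0^{-1}$) and apply $\texttt{M}_{10}$ on that one-sided interval to force $t^N>u\cdot y_0^{-1}$ (respectively $t^N<v\cdot y_0^{-1}$), giving $y=y_0\cdot t^N>u$ (respectively $y<v$).

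I expect the main obstacle to be not any single computation but the bookkeeping that decouples the two requirements. The axiom $\texttt{M}_{11}$ produces a non-power witness with no control over its size, while $\texttt{M}_{10}$ controls size but cannot by itself guarantee that any power relation fails; the rescaling by a common-multiple power $t^N$ is precisely the device that lets the two schemes be used in tandem, because it preserves \emph{every} condition $\neg\Re_{m_j}$ at once. Once this is seen, each of the four cases reduces to a two-line argument, so the genuine content of the proof is the preservation principle together with the verification that $t^{N/m_j}$ is a bona fide term of the language.
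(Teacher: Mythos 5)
Your proposal is correct and takes essentially the same route as the paper's own proof: there, too, the witness $\gamma$ comes from $\texttt{M}_{11}$ (with $n=1$), one sets $M=\prod_{j}m_j$, uses $\texttt{M}_{10}$ to place $\delta^M$ inside $(u\cdot\gamma^{-1},\,v\cdot\gamma^{-1})$, and takes $y=\gamma\cdot\delta^M$, with your ``preservation principle'' appearing implicitly in the final contradiction step ($\Re_{m_j}(\gamma\cdot\delta^M\cdot x_j)\Rightarrow\Re_{m_j}(\gamma\cdot x_j)$). The only differences are cosmetic: you allow $N$ to be any common multiple of the $m_j$ rather than their product, you isolate the preservation step as a stand-alone observation, and you spell out the two half-line cases via $\texttt{O}_5$/$\texttt{O}_6$, which the paper proves only for the hardest (two-sided) case.
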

\begin{proof}
The first sentence is a  consequence of $\texttt{M}_{11}$ (of Definition~\ref{def-tq}) for $n=1$. We show the last sentence. There exists  $\gamma$, by $\texttt{M}_{11}$, such that $\bigwedge\hspace{-1.5ex}\bigwedge_{j}\neg\Re_{m_j}(\gamma\cdot x_j)$. Let $M=\prod_jm_j$; by $\texttt{M}_{10}$ there exists $\delta$ such that $u\cdot\gamma^{-1}<\delta^M<v\cdot\gamma^{-1}$. Now for $y=\gamma\cdot\delta^M$ we have that $u<y<v$ and $\bigwedge\hspace{-1.5ex}\bigwedge_{j}\neg\Re_{m_j}(y\cdot x_j)$ since if (otherwise) we had $\Re_{m_j}(y\cdot x_j)$ then $\Re_{m_j}(\gamma\cdot\delta^M\cdot x_j)$ and so $\Re_{m_j}(\gamma\cdot x_j)$ would hold; a contradiction.
\qed\end{proof}
\begin{lemma}\label{lem-cqe}
In the theory ${\sf TQ}$ the following formulas

$\exists x [\Re_n(x\cdot t)\wedge \bigwedge\hspace{-1.5ex}\bigwedge_{j<q}\neg\Re_{m_j}(x\cdot s_j)]$,

$\exists x [u<x\wedge \Re_n(x\cdot t)\wedge \bigwedge\hspace{-1.5ex}\bigwedge_{j<q}\neg\Re_{m_j}(x\cdot s_j)]$ and

$\exists x [x<v\wedge \Re_n(x\cdot t)\wedge \bigwedge\hspace{-1.5ex}\bigwedge_{j<q}\neg\Re_{m_j}(x\cdot s_j)]$

\noindent
are equivalent with

\qquad $\bigwedge\hspace{-1.5ex}\bigwedge_{m_j\mid n (j<q)}\neg\Re_{m_j}(t^{-1}\cdot s_j)$\textup{;}

\noindent
and the formula

$\exists x [u<x<v\wedge \Re_n(x\cdot t)\wedge \bigwedge\hspace{-1.5ex}\bigwedge_{j<q}\neg\Re_{m_j}(x\cdot s_j)]$

\noindent
is equivalent with

\qquad $\bigwedge\hspace{-1.5ex}\bigwedge_{m_j\mid n (j<q)}\neg\Re_{m_j}(t^{-1}\cdot s_j)\wedge u<v$.
\end{lemma}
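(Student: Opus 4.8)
The plan is to eliminate $\exists x$ by the substitution $x=y^{n}\cdot t^{-1}$, which makes the positive conjunct $\Re_{n}(x\cdot t)$ hold automatically, and then to read off what the remaining conjuncts demand of $y$. The governing observation is that each negative conjunct $\neg\Re_{m_{j}}(x\cdot s_{j})$ behaves differently according to whether $m_{j}\mid n$ or $m_{j}\nmid n$: from $\texttt{M}_{1}$, $\texttt{M}_{3}$ and $\texttt{M}_{4}$ the $m_{j}$th powers form a multiplicative subgroup, and whenever $\Re_{n}(a)$ holds with $m_{j}\mid n$ one also has $\Re_{m_{j}}(a)$ (an $n$th power is an $m_{j}$th power, by writing $w^{n}=(w^{n/m_{j}})^{m_{j}}$). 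For indices with $m_{j}\mid n$ this rigidly ties the conjunct to $t$ and $s_{j}$ alone, whereas for $m_{j}\nmid n$ the axiom $\texttt{M}_{11}$ will give us room to violate $m_{j}$th-power-ness at will.

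For the forward direction, suppose $x$ witnesses the existential formula and fix $j$ with $m_{j}\mid n$. Then $\Re_{m_{j}}(x\cdot t)$ holds; were $\Re_{m_{j}}(t^{-1}\cdot s_{j})$ also to hold, the subgroup property would give $\Re_{m_{j}}\big((x\cdot t)\cdot(t^{-1}\cdot s_{j})\big)=\Re_{m_{j}}(x\cdot s_{j})$, contradicting the hypothesis $\neg\Re_{m_{j}}(x\cdot s_{j})$. Hence $\neg\Re_{m_{j}}(t^{-1}\cdot s_{j})$ for every such $j$, which is exactly the displayed right-hand side; in the two-sided formula the extra conjunct $u<v$ is immediate from $u<x<v$.

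For the converse I would build the witness. Assuming the right-hand side (together with $u<v$ in the two-sided case), I look for $x=y^{n}\cdot t^{-1}$. Applying $\texttt{M}_{11}$ with exponent $n$ to the family $\{t^{-1}\cdot s_{j} : m_{j}\nmid n\}$ produces some $y_{0}$ with $\bigwedge_{m_{j}\nmid n}\neg\Re_{m_{j}}(y_{0}^{\,n}\cdot t^{-1}\cdot s_{j})$. Setting $M=\prod_{m_{j}\nmid n}m_{j}$ and replacing $y_{0}$ by $y=y_{0}\cdot\delta^{M}$ gives $x=y_{0}^{\,n}\cdot\delta^{nM}\cdot t^{-1}$, where $\delta^{nM}$ is an $m_{j}$th power for every occurring $j$ (since $m_{j}\mid M\mid nM$). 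Multiplying by such a factor preserves $\Re_{n}(x\cdot t)$ and every non-power condition: for $m_{j}\nmid n$ the factor $y_{0}^{\,n}\cdot t^{-1}\cdot s_{j}$ stays a non-$m_{j}$th-power, and for $m_{j}\mid n$ one has $\Re_{m_{j}}(x\cdot s_{j})\leftrightarrow\Re_{m_{j}}(t^{-1}\cdot s_{j})$, which the hypothesis makes false. Finally $\delta$ is chosen so that $x$ falls in the prescribed range: in the two-sided case $u<v$ makes the interval $(u\cdot t\cdot y_{0}^{-n},\,v\cdot t\cdot y_{0}^{-n})$ nonempty, and $\texttt{M}_{10}$ at exponent $nM$ furnishes a $\delta$ with $\delta^{nM}$ inside it, so that $u<x<v$.

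The three unbounded or one-sided formulas are treated identically, invoking only the matching one-sided (or vacuous) density clause instead of a two-sided interval, which is why their right-hand sides carry no $u<v$. The step I expect to be the crux is this last relocation: one must transport the non-power witness $y_{0}$ into the required interval without spoiling the carefully arranged conjuncts $\neg\Re_{m_{j}}$. It goes through precisely because the correction factor is an $m_{j}$th power for every $m_{j}$ that can appear — this is exactly the role of the common exponent $M$ — so that $\texttt{M}_{10}$ can be applied while the output of $\texttt{M}_{11}$ is left intact; it is the device already used for Lemma~\ref{lem-bqe}, now run in tandem with the positive requirement $\Re_{n}$.
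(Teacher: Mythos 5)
Your proof is correct and follows essentially the same route as the paper's: the forward direction via the observation that $m_j\mid n$ turns $\Re_n(x\cdot t)$ into $\Re_{m_j}(x\cdot t)$ and hence forbids $\Re_{m_j}(t^{-1}\cdot s_j)$, and the converse by combining a witness from $\texttt{M}_{11}$ (at exponent $n$, applied to the family $\{t^{-1}\cdot s_j : m_j\nmid n\}$) with a $\delta$ from $\texttt{M}_{10}$ at exponent $nM$, which is exactly the paper's witness $x=\delta^{Mn}\cdot\gamma^{n}\cdot t^{-1}$ together with its two-case verification. The only immaterial differences are that you take $M$ to be the product of just those $m_j$ with $m_j\nmid n$ rather than of all the $m_j$ (both choices make $\delta^{nM}$ an $m_j$th power for every occurring $j$), and that your explicit $y_0^{\,n}$ in the conclusion of $\texttt{M}_{11}$ is, if anything, a cleaner rendering of what the paper writes as $\gamma$.
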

\begin{proof}
If $m_j\mid n$ then $\Re_n(x\cdot t)$ implies $\Re_{m_j}(x\cdot t)$. Now, if $\Re_{m_j}(t^{-1}\cdot s_j)$ were true then $\Re_{m_j}(x\cdot s_j)$ would be true too;  contradicting $\bigwedge\hspace{-1.5ex}\bigwedge_{j<q}\neg\Re_{m_j}(x\cdot s_j)$. Suppose now that the relation $\bigwedge\hspace{-1.5ex}\bigwedge_{m_j\mid n}\neg\Re_{m_j}(t^{-1}\cdot s_j)$ holds. By $\texttt{M}_{11}$ there exists some $\gamma$ such that $\bigwedge\hspace{-1.5ex}\bigwedge_{m_j\nmid n}\neg\Re_{m_j}(\gamma\cdot t^{-1}\cdot s_j)$. By $\texttt{M}_{10}$ there exists some $\delta$ such that $u\cdot t\cdot\gamma^{-n}<\delta^{M\cdot n}<v\cdot t\cdot\gamma^{-n}$ (if $u<v$) where $M=\prod_{j<q}m_j$. For $x=\delta^{M\cdot n}\cdot \gamma^{n}\cdot t^{-1}$ we have $u<x<v$ and $\Re_n(x\cdot t)$. We show $\neg\Re_{m_j}(x\cdot s_j)$ for each $j<q$ by distinguishing two cases: if $m_j\mid n$ then $\neg\Re_{m_j}(t^{-1}\cdot s_j)$ implies $\neg\Re_{m_j}(\delta^{M\cdot n}\cdot \gamma^{n}\cdot t^{-1}\cdot s_j)$; if $m_j\nmid n$ then $\neg\Re_{m_j}(\gamma\cdot t^{-1}\cdot s_j)$ implies $\neg\Re_{m_j}(\delta^{M\cdot n}\cdot \gamma^{n}\cdot t^{-1}\cdot s_j)$.
\qed\end{proof}
\begin{theorem}[Axiomatizablity of $\langle\mathbb{Q};<,\times\rangle$]\label{omq+}
The infinite theory ${\sf TQ}$ completely axiomatizes the theory of   $\langle\mathbb{Q}^+;<,\times\rangle$, and moreover $\langle\mathbb{Q}^+;<,\times,\square^{-1},{\bf 1},\{\Re_n\}_{n>1}\rangle$   admits quantifier elimination.

\noindent
Also, the structure $\langle\mathbb{Q};<,\times\rangle$ can be completely axiomatized by the theory that results from ${\sf TQ}$ by adding the axioms $\texttt{M}_8$ \textup{(}in  Theorem~\ref{thm-or}\textup{)} and
substituting its $\texttt{M}_2$, $\texttt{M}_3$, $\texttt{M}_5$,  $\texttt{M}_6$  and  $\texttt{M}_{10}$, respectively, with the axioms $\texttt{M}_2^\circ$, $\texttt{M}_3^\circ$, $\texttt{M}_5^\circ$, $\texttt{M}_5^\bullet$, $\texttt{M}_6^\circ$  and

 $(\texttt{M}_{10}^\circ)$    $\forall x,z\exists y ({\bf 0}<x<z\rightarrow x<y^n<z)$.

\noindent
 Moreover, %the theory of the structure
   $\langle\mathbb{Q};<,\times,\square^{-1},{\bf -1},{\bf 0},{\bf 1},\{\Re_n\}_{n>1}\rangle$   admits quantifier elimination.
\end{theorem}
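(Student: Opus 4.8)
The plan is to establish quantifier elimination for $\langle\mathbb{Q}^+;<,\times,\square^{-1},{\bf 1},\{\Re_n\}_{n>1}\rangle$ first, and then to transfer the result to $\mathbb{Q}$ exactly as Theorem~\ref{thm-or} transferred $\mathbb{R}^+$ to $\mathbb{R}$. For the positive part, by Remark~\ref{mainlem} it suffices to eliminate the quantifier from a formula $\exists x\,\bigwedge_i\alpha_i$ in which each $\alpha_i$ is atomic or negated atomic. Using $\texttt{O}_1,\texttt{O}_2,\texttt{O}_3$ the negations of $<$ and $=$ are rewritten positively, so the only genuine negations left are those of the predicates $\Re_m$. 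Since every term is a product of integer powers of variables, each term containing $x$ equals $x^a\cdot t$ with $t$ an $x$-free term and $a\in\mathbb{Z}$; using the inverse operation and the monotonicity axiom $\texttt{M}_5$ one may assume each exponent $a$ is positive, and then, raising through the provable equivalences $[s<t]\leftrightarrow[s^k<t^k]$, $[s=t]\leftrightarrow[s^k=t^k]$ and $\Re_{nk}(z^k)\leftrightarrow\Re_n(z)$, one may assume all occurrences of $x$ carry the same exponent $\alpha$. As $x\mapsto x^\alpha$ is an order-isomorphism of $\mathbb{Q}^+$ onto the set of $\alpha$-th powers, the substitution $y=x^\alpha$ converts the formula into $\exists y\,(\Re_\alpha(y)\wedge\phi(y))$, where every atomic constituent of $\phi$ is now one of $y=u$, $u<y$, $y<v$, $\Re_N(y\cdot t)$, or $\neg\Re_M(y\cdot t)$.

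If $\phi$ contains an equality $y=u$ the quantifier is removed by substituting $u$ for $y$. Otherwise I would collect the positive radical constraints, namely $\Re_\alpha(y)$ together with the $\Re_{N}(y\cdot t)$ coming from $\phi$, and fuse them into a single constraint $\Re_n(y\cdot\beta)$ plus $y$-free side conditions by Lemma~\ref{lem-1qe}; the finitely many lower bounds $u_i<y$ and upper bounds $y<v_j$ are then reduced to at most one of each by the case-splitting device already used in Theorems~\ref{thm-o} and~\ref{thm-oaz} (replacing $\bigwedge_i u_i<y$ by disjunctions singling out the largest lower bound, and dually for the upper bounds). The resulting formula has precisely the shape treated in Lemma~\ref{lem-cqe}, with Lemmas~\ref{lem-aqe} and~\ref{lem-bqe} covering the degenerate cases where there is no negated or no positive radical and the pure density case following from $\texttt{M}_{10}$ at $n=1$; that lemma returns a quantifier-free equivalent, completing the elimination.

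Quantifier elimination yields completeness and decidability in the usual way: every sentence is equivalent to a quantifier-free sentence built only from the constant ${\bf 1}$, and ${\sf TQ}$ decides each such atom ($\,{\bf 1}<{\bf 1}$ is refuted, while $\,{\bf 1}={\bf 1}$ and $\Re_n({\bf 1})$ are proved since ${\bf 1}={\bf 1}^n$), so ${\sf TQ}$ is complete; being recursively axiomatized it is therefore decidable, and since $\langle\mathbb{Q}^+;<,\times\rangle\models{\sf TQ}$ it axiomatizes the theory of that structure.

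For $\langle\mathbb{Q};<,\times\rangle$ I would replicate the architecture of Theorem~\ref{thm-or}: introduce ${\bf 0}$ and ${\bf -1}$ and split every variable into its three sign cases via $\exists x\,\eta(x)\equiv\exists x\!>\!{\bf 0}\,\eta(x)\vee\eta({\bf 0})\vee\exists y\!>\!{\bf 0}\,\eta(-y)$, so that after simplifying the terms that involve ${\bf 0}$ and ${\bf -1}$ one is left with a formula all of whose variables are positive, i.e.\ inside $\mathbb{Q}^+$, where the first part applies. What must be checked is that the relativizations to $\mathbb{Q}^+$ of the ${\sf TQ}$-axioms are derivable from the signed axiom set $\texttt{M}_2^\circ,\texttt{M}_3^\circ,\texttt{M}_5^\circ,\texttt{M}_5^\bullet,\texttt{M}_6^\circ,\texttt{M}_8,\texttt{M}_{10}^\circ$, just as the relativizations of $\texttt{M}_6,\texttt{M}_7$ were verified at the end of Theorem~\ref{thm-or}. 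I expect the main obstacle to be the bookkeeping for the radical predicates under the sign split: on nonpositive arguments $\Re_n$ must be rewritten according to the rules that $\Re_n({\bf 0})$ holds, that $\Re_{2k}(w)$ fails for $w<{\bf 0}$, and that $\Re_{2k+1}(-w)\leftrightarrow\Re_{2k+1}(w)$ for $w>{\bf 0}$, where the even/odd dichotomy and the fact that $\pm{\bf 1}$ are the only solutions of $x^{2n}={\bf 1}$ are forced by $\texttt{M}_8$. Carrying out this analysis correctly is more delicate than the constant-elimination of Theorem~\ref{thm-or}, since here the predicates themselves, and not merely the terms, are sign-sensitive; once it is done, the reduction to the positive case and hence quantifier elimination and completeness for $\langle\mathbb{Q};<,\times\rangle$ follow.
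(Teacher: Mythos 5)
Your proposal follows essentially the same route as the paper's own proof of the $\mathbb{Q}^+$ part: reduce all exponents of $x$ to a common value $\alpha$ via the equivalences $a<b\leftrightarrow a^k<b^k$ and $\Re_{nk}(z^k)\leftrightarrow\Re_n(z)$, substitute $y=x^\alpha$ (picking up the side constraint $\Re_\alpha(y)$, as in Theorem~\ref{thm-oaz}), normalize to at most one equality, one lower bound, one upper bound and one positive radical via Lemma~\ref{lem-1qe}, and then dispatch the cases through Lemmas~\ref{lem-aqe}, \ref{lem-bqe} and \ref{lem-cqe}. The only difference is one of coverage, not of method: the paper proves the $\mathbb{Q}^+$ part only and leaves the passage to $\mathbb{Q}$ to the technique of Theorem~\ref{thm-or}, whereas your sketch of the sign-splitting step, including the rules for $\Re_n$ on zero and negative arguments ($\Re_n({\bf 0})$ holds, $\Re_{2k}$ fails on negatives, $\Re_{2k+1}(-w)\leftrightarrow\Re_{2k+1}(w)$, all derivable with $\texttt{M}_8$, $\texttt{M}_5^\circ$, $\texttt{M}_5^\bullet$), makes explicit precisely what the paper omits.
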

\begin{proof}
Let us prove the $\mathbb{Q}^+$ part only. We are to eliminate the quantifier of the formula
\begin{equation}\label{q-1}
\exists x (\bigwedge\hspace{-2.35ex}\bigwedge_{i<p} \Re_{n_i}(x^{a_i}\cdot t_i) \;\wedge\; \bigwedge\hspace{-2.35ex}\bigwedge_{j<q} \neg\Re_{m_j}(x^{b_j}\cdot s_j) \;\wedge\;
 \bigwedge\hspace{-2.35ex}\bigwedge_{k<f} u_k\!<\!x^{c_k} \;\wedge\;  \bigwedge\hspace{-2.3ex}\bigwedge_{\ell<g} x^{d_\ell}\!<\!v_\ell \;\wedge\;
 \bigwedge\hspace{-2.25ex}\bigwedge_{\iota<h} x^{e_\iota}=w_\iota).
\end{equation}
By the equivalences $a^n<b^n \leftrightarrow a<b$ and $\Re_{m\cdot n}(a^n)\leftrightarrow \Re_m(a)$ we can assume that all the $a_i$'s, $b_j$'s, $c_k$'s, $d_\ell$'s and $e_\iota$'s are equal to each other, and moreover, equal to one (cf. the proof of Theorem~\ref{thm-oaz}). We can also assume that $h=0$ and that $f,g\leqslant 1$. By Lemma~\ref{lem-1qe}   we can also assume that $p\leqslant 1$. If $q=0$ then Lemma~\ref{lem-aqe} implies that the quantifier of the  formula~\eqref{q-1} can be eliminated. So, we assume that $q>0$. If $p=0$ then the quantifier of \eqref{q-1} can be eliminated by Lemma~\ref{lem-bqe}. Finally, if $p=1$ (and $q\neq 0=h$ and  $f,g\leqslant 1$) then Lemma~\ref{lem-cqe} implies that the formula  \eqref{q-1} is equivalent with a quantifier-free formula.
\qed\end{proof}
\begin{corollary}[Non-Definability of Addition]\label{cor-notsum}
The addition operation $(+)$ is not definable in the structure  $\langle\mathbb{Q};<,\times\rangle$.
\end{corollary}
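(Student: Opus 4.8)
The plan is to derive a contradiction with the undecidability of $\langle\mathbb{Q};+,\times\rangle$, which was established by Robinson and cited in Section~\ref{sec-intro}. So suppose, toward a contradiction, that there is a $\{<,\times\}$-formula $\varphi(x,y,z)$ defining addition in $\langle\mathbb{Q};<,\times\rangle$, meaning that $\langle\mathbb{Q};<,\times\rangle\models\varphi(a,b,c)$ holds exactly when $a+b=c$, for all $a,b,c\in\mathbb{Q}$.

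First I would set up the standard translation that eliminates the (hypothetically) defined symbol $+$. Given any sentence $\psi$ in the language $\{+,\times\}$, I would rewrite it in an equivalent form in which $+$ occurs only inside atomic subformulas of the shape $w=u+v$ (introducing auxiliary existentially quantified variables to unravel nested terms), and then replace each such atom by $\varphi(u,v,w)$. This produces, effectively from $\psi$, a $\{<,\times\}$-sentence $\psi^{\ast}$; and by the definability hypothesis one verifies, by induction on the construction of $\psi$, that $\langle\mathbb{Q};+,\times\rangle\models\psi \iff \langle\mathbb{Q};<,\times\rangle\models\psi^{\ast}$.

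Next I would invoke Theorem~\ref{omq+}, which gives the decidability of the theory of $\langle\mathbb{Q};<,\times\rangle$. Composing the computable map $\psi\mapsto\psi^{\ast}$ with this decision procedure would yield a decision procedure for the theory of $\langle\mathbb{Q};+,\times\rangle$, contradicting its undecidability. This contradiction finishes the argument.

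The proof is almost entirely routine once the two external inputs (decidability of $\langle\mathbb{Q};<,\times\rangle$ from Theorem~\ref{omq+}, and undecidability of $\langle\mathbb{Q};+,\times\rangle$) are in place. The only point requiring a little care is the faithfulness of the translation $\psi\mapsto\psi^{\ast}$, in particular the handling of compound terms so that each occurrence of $+$ is captured by exactly one instance of $\varphi$; but this is the familiar procedure for eliminating an explicitly definable operation symbol and poses no genuine obstacle. I do not anticipate any truly hard step, since the entire force of the corollary rests on the quantifier-elimination result of Theorem~\ref{omq+}: it is precisely what separates $\langle\mathbb{Q};<,\times\rangle$ from the undecidable $\langle\mathbb{Q};+,\times\rangle$ and thereby rules out the definability of $+$.
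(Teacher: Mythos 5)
Your proposal is correct and follows essentially the same route as the paper: the paper's own proof is exactly the observation that a defining formula for $+$ would transfer the decidability of Theorem~\ref{omq+} to $\langle\mathbb{Q};+,\times\rangle$ (equivalently, to $\langle\mathbb{Q};<,+,\times\rangle$), contradicting Robinson's undecidability result. The paper states this in one sentence, while you spell out the standard term-unraveling translation $\psi\mapsto\psi^{\ast}$ that the paper leaves implicit; that is the only difference.
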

\begin{proof}
If it were, then the structure $\langle\mathbb{Q};<,+,\times\rangle$ would be decidable by Theorem~\ref{omq+}; but Robinson~\cite{robinson} proved that this structure is not decidable.
\qed\end{proof}
\begin{remark}[\textbf{Infinite Axiomatizability}]\label{rem-q}
To see that the structure $\langle\mathbb{Q}^+;<,\times\rangle$ cannot be finitely axiomatized, we present an ordered multiplicative structure that satisfies any sufficiently large  finite number of the axioms of ${\sf TQ}$ but does not satisfy all of its axioms. Let $\mathfrak{p}$ be a sufficiently large prime number. Let us recall that the set $\mathbb{Q}/\mathfrak{p}=\{m/\mathfrak{p}^k\mid m\in\mathbb{Z},k\in\mathbb{N}\}$ is closed under addition and the operation $x\mapsto x/\mathfrak{p}$,  and %the inclusions
$\mathbb{Z}\subset\mathbb{Q}/\mathfrak{p}\subset\mathbb{Q}$ holds.
Let $\rho_0,\rho_1,\rho_2,\cdots$ denote  the sequence of all prime numbers ($2,3,5,\cdots$). Let $(\mathbb{Q}/\mathfrak{p})^\ast$ be the set $\{\prod_{i<\ell}\rho_i^{r_i}\mid \ell\in\mathbb{N},r_i\in\mathbb{Q}/\mathfrak{p}\}$;  this  is closed under multiplication and the operation $x\mapsto {x}^{1/\mathfrak{p}}$, and we have %the inclusions
$\mathbb{Q}^+\subset (\mathbb{Q}/\mathfrak{p})^\ast \subset\mathbb{R}^+$. Thus, $(\mathbb{Q}/\mathfrak{p})^\ast$  satisfies the axioms $\texttt{O}_{1}$, $\texttt{O}_{2}$, $\texttt{O}_{3}$, $\texttt{M}_{1}$, $\texttt{M}_{2}$, $\texttt{M}_{3}$, $\texttt{M}_{4}$, $\texttt{M}_{5}$  and $\texttt{M}_{6}$ of Proposition~\ref{thm-or+}, and also the axiom $\texttt{M}_{10}$. However, it does not satisfy the axiom $\texttt{M}_{11}$ for $n=q=x_0=1$ and $m_0=\mathfrak{p}$ because $(\mathbb{Q}/\mathfrak{p})^\ast\models\forall y\Re_{\mathfrak{p}}(y)$. We show that $(\mathbb{Q}/\mathfrak{p})^\ast$ satisfies the instances of the axiom $\texttt{M}_{11}$ when $1<m_j<\mathfrak{p}$ (for each $j<q$  and arbitrary $n,q$). Thus, no finite number of the instances of $\texttt{M}_{11}$ can prove all of its instances (with the rest of the axioms of ${\sf TQ}$). Let $x_j$'s be given from $(\mathbb{Q}/\mathfrak{p})^\ast$; write $x_j=\prod_{i<\ell_j}\rho_i^{r_{i,j}}$ where we can assume that $\ell_j\geqslant q$. Put $r_{j,j}=u_j/\mathfrak{p}^{v_j}$ where $u_j\in\mathbb{Z}$ and $v_j\in\mathbb{N}$ (for each $j<q$).
Define $t_j$ to be $1$ when $m_j\mid u_j$ and be $m_j$ when $m_j\nmid u_j$. Let $y=\prod_{i<q}\rho_i^{(t_i/\mathfrak{p}^{v_i+1})}$ ($\in(\mathbb{Q}/\mathfrak{p})^\ast$). We show $\bigwedge\hspace{-1.5ex}\bigwedge_{j<q}\neg\Re_{m_j}(y^n\cdot x_j)$ under the assumption $\bigwedge\hspace{-1.5ex}\bigwedge_{j<q} m_j\nmid n$. Take a $k<q$, and assume (for the sake of contradiction) that $\Re_{m_k}(y^n\cdot x_k)$. Then $\Re_{m_k}(\rho_k^{nt_k/\mathfrak{p}^{v_k+1}}\cdot
\rho_k^{u_k/\mathfrak{p}^{v_k}})$ holds, and so there should exist some $a,b$ %with $(a,\mathfrak{p})=1$
such that $\rho_k^{(nt_k+\mathfrak{p}u_k)/\mathfrak{p}^{v_k+1}}=
\rho_k^{(m_k\cdot a)/\mathfrak{p}^b}$. Therefore, $m_k\mid nt_k+\mathfrak{p}u_k$. We reach to a contradiction by distinguishing two cases:

\hspace{-1em} (i) if $m_k\mid u_k$ then $t_k=1$ and so $m_k\mid n+\mathfrak{p}u_k$ whence $m_k\mid n$,  contradicting $\bigwedge\hspace{-1.5ex}\bigwedge_{j<q} m_j\nmid n$;

\hspace{-1em} (ii) if $m_k\nmid u_k$ then $t_k=m_k$ and so $m_k\mid nm_k+\mathfrak{p}u_k$ whence $m_k\mid \mathfrak{p}u_k$ which by the identity $(m_k,\mathfrak{p})=1$ implies that $m_k\mid u_k$, contradicting the assumption (of $m_k\nmid u_k$).
\qedef\end{remark}
%%%%%%%%%%%%%%%%%%%%%%%%%%%%%%%%%%%%%%%%%%%%%%%%%%%%
%%%%%%%%%%%%%%%%%%%%%%%%%%%%%%%%%%%%%%%%%%%%%%%%%%%%
%%%%%%%%%%%%%%%%%%%%%%%%%%%%%%%%%%%%%%%%%%%%%%%%%%%%
%%%%%%%%%%%%%%%%%%%%%%%%%%%%%%%%%%%%%%%%%%%%%%%%%%%%
\section{Conclusions}\label{sec-conc}
In the following table the decidable structures are denoted by $\boldsymbol\triangle$ and the undecidable ones by $\boldsymbol\triangle\hspace{-2.45mm}\backslash\hspace{-1.75mm}/$ :
\begin{center}
\begin{tabular}{|c||c|c|c|c|}
\hline
  & $\mathbb{N}$ & $\mathbb{Z}$ & $\mathbb{Q}$ & $\mathbb{R}$ \\
\hline
\hline
$\{<\}$ & $\boldsymbol\triangle$ & $\boldsymbol\triangle$ & $\boldsymbol\triangle$ & $\boldsymbol\triangle$ \\
\hline
$\{<,+\}$ & $\boldsymbol\triangle$ & $\boldsymbol\triangle$ & $\boldsymbol\triangle$ & $\boldsymbol\triangle$ \\
\hline
$\{<,\times\}$ & $\boldsymbol\triangle\hspace{-2.45mm}\backslash\hspace{-1.75mm}/$ \;  & $\boldsymbol\triangle\hspace{-2.45mm}\backslash\hspace{-1.75mm}/$ \;  &   $\boldsymbol\triangle$     & $\boldsymbol\triangle$ \\
\hline
\hline
$\{+,\times\}$ & $\boldsymbol\triangle\hspace{-2.45mm}\backslash\hspace{-1.75mm}/$ \;  & $\boldsymbol\triangle\hspace{-2.45mm}\backslash\hspace{-1.75mm}/$ \;  &  \ $\boldsymbol\triangle\hspace{-2.45mm}\backslash\hspace{-1.75mm}/$ \;   & $\boldsymbol\triangle$ \\
\hline
\end{tabular}
\end{center}
The decidability of the structure $\langle\mathbb{Q};<,\times\rangle$ is a new result of this paper, along with the explicit axiomatization for the already known decidable structure  $\langle\mathbb{R};<,\times\rangle$.
For the other decidable structures (other than $\langle\mathbb{N};<\rangle$ and $\langle\mathbb{N};<,+\rangle$) some old and some new (syntactic) proofs were given  for their decidability, with explicit axiomatizations. It is interesting to note that the undecidability of $\langle\mathbb{N};<,\times\rangle$ and $\langle\mathbb{Z};<,\times\rangle$ are inherited from the undecidability of $\langle\mathbb{N};+,\times\rangle$ and $\langle\mathbb{Z};+,\times\rangle$ (and the definability of $+$ in terms of $<$ and $\times$ in $\mathbb{N}$ and $\mathbb{Z}$), and the decidability of $\langle\mathbb{R};<,\times\rangle$ comes from the decidability of $\langle\mathbb{R};+,\times\rangle$ (and the definability of $<$ in terms of $+$ and $\times$ in $\mathbb{R}$). Nonetheless, the undecidability of the structure $\langle\mathbb{Q};+,\times\rangle$ has nothing to do with the (decidable) structure $\langle\mathbb{Q};<,\times\rangle$; indeed $+$ is not definable in $\langle\mathbb{Q};<,\times\rangle$ even though $<$ is definable in $\langle\mathbb{Q};+,\times\rangle$.

%%%%%%%%%%%%%%%%%%%%%%%%%%%%%%%%%%%%%%%%%%%%%%%%%%%%
%%%%%%%%%%%%%%%%%%%%%%%%%%%%%%%%%%%%%%%%%%%%%%%%%%%%
%%%%%%%%%%%%%%%%%%%%%%%%%%%%%%%%%%%%%%%%%%%%%%%%%%%%
%%%%%%%%%%%%%%%%%%%%%%%%%%%%%%%%%%%%%%%%%%%%%%%%%%%%

%
%\paragraph{\textbf{Compliance with Ethical Standards}}
%
%
%\paragraph{\textbf{Conflict of interest}}
% The authors declare that they have no conflict of
%interest.
%
%
%\paragraph{\textbf{Human and animal rights statement}}
%This article does not contain any studies with human participants or animals performed by any of the authors.
%
%
% BibTeX users please use one of
%\bibliographystyle{spbasic}      % basic style, author-year citations
%\bibliographystyle{spmpsci}      % mathematics and physical sciences
%\bibliographystyle{spphys}       % APS-like style for physics
%\bibliography{}   % name your BibTeX data base

% Non-BibTeX users please use

\end{document}